\newcommand{\trans}{\mathsf{T}}
\newcommand{\e} {\varepsilon}
\newcommand{\Z} {\mathbb{Z}}
\newcommand{\N} {\mathbb{N}}
\newcommand{\R} {\mathbb{R}}
\newcommand{\di}[1]{\operatorname{d}\!#1}
\newtheorem{theorem}{Theorem}
\newtheorem{lemma}{Lemma}
\newcommand{\vp}{\varphi}
\newcommand{\p}{\mathcal{P}}
\newcommand{\abs}[1]{|#1|_{\text{vec}}}
\begin{document}


\title{Asymptotic Stability and Smooth Lyapunov Functions for a Class of Abstract Dynamical Systems
}


\author{Michael Sch\"onlein\footnote
{Institute for Mathematics, University of W\"urzburg, Emil-Fischer
    Stra\ss e 40, 97074 W\"urzburg, Germany, {\tt schoenlein@mathematik.uni-wuerzburg.de}}}


\maketitle

\begin{abstract}
This paper deals with asymptotic stability of a class of dynamical systems in terms of smooth Lyapunov pairs. We point out that well known converse Lyapunov results for differential inclusions cannot be applied to this class of dynamical systems. Following an abstract approach we put an assumption on the trajectories of the dynamical systems which demands for any trajectory the existence of a neighboring trajectory such that their difference grows linearly in time and distance of the starting points. Under this assumption, we prove the existence of a $C^\infty$-smooth Lyapunov pair. We also show that this assumption is satisfied by differential inclusions defined by Lipschitz continuous set-valued maps taking nonempty, compact and convex values.
\end{abstract}

{\it Keywords:}
abstract dynamical systems, asymptotic stability, converse Lyapunov theorem, smooth Lyapunov pair

\section{Introduction}

Inspired by the work of Lyapunov starting in the 1950s a lot of effort has been spent on the stability analysis of dynamical systems in terms of Lyapunov functions. Beginning with ordinary differential equations defined by a continuous function, the results have been extended to differential inclusions
\begin{align}\label{DI}
\dot x(t) \in F(x(t)), \qquad x(t) \in \R^n,\,t\geq 0 ,\, x(0)=x_0 \in \R^n,
\end{align}
where $F\colon \R^n \rightsquigarrow \R^n$ is a set-valued map satisfying $0 \in F(0)$. A comprehensive exploration of the connection between stability of differential equations and differential inclusions and Lyapunov functions can be found in \cite{bacciotti}. For general references to set-valued maps and differential inclusions the interested reader is referred to \cite{aubin-frankowska} and \cite{aubin-cellina,smirnov}, respectively. 

A solution to \eqref{DI} is an absolutely continuous function $x: \R_+ \to \R^n$ with $x(0)=x_0$ such that \eqref{DI} is satisfied almost everywhere. Following \cite[Proposition~2.2]{clarke98} the equilibrium $x=0$ of differential inclusion \eqref{DI} is called \emph{strongly asymptotically stable} if each solution can be extended to $[0,\infty)$, for any $\e>0$ there is a $\delta>0$ such that
any solution $x(\cdot)$ with $\|x(0)\|<\delta$ satisfies $\|x(t)\| < \e$ for all $t\geq 0$, and for each individual solution $x(\cdot)$, one has $\lim_{t\to \infty} x(t)=0$. 

The analysis of robust stability has been an active field in the dynamical systems literature. In the wake of this, the investigation of converse Lyapunov theorems and, in particular, the construction of smooth Lyapunov functions is of vital interest, cf. \cite{clarke98,siconolfi2012,teel2000smooth}.

Clarke, Ledyaev and Stern \cite{clarke98} (see also \cite{teel2000smooth}) have shown that, provided $F(x)$ is nonempty, compact and convex for every $x \in \R^n$ and the set-valued map $F$ is upper semicontinuous, i.e. for any $x \in \R^n$ and any $\e>0$ there is a $\delta>0$ such that
$F(y)\subset F(x) + \e B(0,1)$ for all $y \in x + \delta B(0,1)$, where $B(0,1)$ denotes the unit open ball in $\R^n$,
the differential inclusion \eqref{DI} is strongly asymptotically stable if and only if there is a $C^\infty$-smooth and positive definite pair of functions $(V,W)$ such that $V$ is proper and
\begin{align}\label{ineq:V-decrease}
\max_{v \in F(x)} \langle \nabla V(x), v \rangle \leq - W(x)\qquad \text{ for all }\, x \in \R^n\setminus \{0\}.
\end{align}
A different proof of this converse Lyapunov theorem following a metric approach using weak KAM theory has been obtained by Siconolfi and Terrone, cf. \cite{siconolfi2012}. Related results for retarded functional equations and difference inclusions can be found in  \cite{karafyllis06} and \cite{kelletteel04}, respectively.

Originating from stochastic systems, such as multiclass queueing networks and semimartingale reflected Brownian motions \cite{dai,DP94}, there is a class of dynamical systems that does not immediately fall into framework mentioned above. More specifically, the analysis of recurrence behavior of the stochastic processes corresponding to multiclass queueing networks or semimartingale reflected Brownian motions, based on the remarkable insights of \cite{dai,DP94,rybkostolyar}, can be reduced to the stability analysis of a related deterministic system, called fluid network and linear Skorokhod problem, respectively. Both models are obtained by taking limits of scaled versions of the stochastic processes.  In \cite{DP94,questa-2012} it is outlined that a wide class of linear Skorokhod problems and fluid networks can be defined by differential inclusions in a natural way. An essential part in \cite{DP94} is to describe the linear Skorokhod problem in terms of a differential inclusion and construct a $C^1$-Lyapunov function. The paper by Dupius and Williams \cite{DP94} was published a few years before the above mentioned paper on smooth Lyapunov functions by Clarke, Ledyaev and Stern \cite{clarke98}.


The differential inclusions considered in \cite{clarke98} and \cite{DP94} are both defined by an upper semicontinuous set-valued map $F$ with nonempty, compact and convex values. The techniques used to construct a smooth Lyapunov function have in common that the set-valued map $F$ is embedded into a local Lipschitz set-valued map $F_L$, which  keeps the property of asymptotic stability. Whereas the procedure in \cite{DP94} uses explicitly the properties of the set-valued map describing the evolution of the linear Skorokhod problem, the embedding technique in \cite{clarke98} is applicable to any upper semicontinuous set-valued map taking nonempty, compact and convex values. The essential feature of the set-valued map $F_L$ being local Lipschitz continuous is that it provides a Lyapunov function which is locally Lipschitz continuous and this property can be carried over to conclude a locally Lipschitz continuous Lyapunov function for the original differential inclusion. Moreover, the local Lipschitz continuity of the set-valued map $F_L$ facilitates to establish that the convolution of the local Lipschitz continuous Lyapunov function and a $C^\infty$-smooth mollifier satisfies locally the decrease condition \eqref{ineq:V-decrease}. The construction is completed by using a locally finite open covering of $\R^n$ and a smooth partition of unity subordinate to it.

We show that, in general the set-valued map defining the differential inclusion describing the evolution of a fluid network is not upper-semicontinuous. Thus, although the the zero solution may be strongly asymptotically stable the existence of a $C^\infty$-smooth Lyapunov pair cannot be concluded by the results on differential inclusions mentioned above. In this paper we follow an abstract point of view, starting with Zubov \cite{zubov1964}, understanding dynamical systems as abstract mathematical objects with certain properties. This has been further explored by Hale, Infante, Slemrod and Walker, cf. \cite{hale69,haleinfante67,slemrod70,walker78}. In the literature there are several terms used, for instance, generalized dynamical system, $C^0$-semigroup, (semi)flow, process or abstract dynamical system, cf. \cite{walker1980} and the references therein. The class of abstract dynamical systems considered in this paper is defined by the characteristic properties of fluid networks. The trajectories of fluid networks evolve in the positive orthant. In order to get a $C^\infty$-smooth Lyapunov function on the positive orthant we use an extension of a Lyapunov function candidate to $\R^n$ by taking absolute values component-by-component. As this defines a continuous map the extended Lyapunov function is continuous as well. We note that, as the solutions to linear Skorokhod problems also stay within the positive orthant, Dupuis and Williams \cite{DP94} solved the boundary problem by shifting the orthant by some positive constant. Further, we note that the class of abstract dynamical systems under consideration may in general not be defined by a differential inclusion. As a consequence the constructions of a local Lipschitz continuous Lyapunov function in \cite{clarke98,DP94,siconolfi2012}, which are based on the right-hand side of the differential inclusion, are not applicable in the present setting. It turns out that the essential ingredient to obtain a local Lipschitz continuous Lyapunov function is an estimate on the evolution of the difference of trajectories. For this reason, we have to make an assumption on the trajectories of the abstract dynamical system (see assumption~(A) in Theorem~\ref{thm:smooth-conv-lyap}). Considering the assumption from the differential inclusions perspective we show that it is automatically satisfied for every differential inclusion with Lipschitz continuous right-hand side.

The paper is organized as follows. In Section~2 we state relevant notation and terminology that is used throughout the paper. Section~3 introduces the class of abstract dynamical systems that is considered and the main result is presented. In Section~4 we outline that the class of abstract dynamical systems is motivated by the analysis of fluid networks. We also show that the classical results on smooth Lyapunov functions do not apply to the class of abstract dynamical systems discussed in this paper. In Section~5 we examine the relation of the assumption posed on the trajectories in the light of differential inclusions. Finally, Section~6 is devoted to the proof of the main result.

\section{Notation and terminology}

A function $f \colon \R^n \to \R$ is called \emph{proper} if the sublevel sets $\{ x \in \R^n | f(x) \leq c\}$ are bounded for all $c>0$. For $r>0$ and $x\in \R^n$ let $B(x,r):=\{ y \in \R^n | \|x-y\| \leq r\}$. A function $k \in C^{\infty}(\R^n,\R_+)$ is called a \emph{mollifier} if $\mbox{supp } k=B(0,1)$ and 
\begin{align*}
 \int_{\R^n} k(x)\,\di{x} =1.
\end{align*}
Furthermore, the support of a mollifier can be scaled in the following way. For $r>0$ consider $ k_{r}(x):= \tfrac{1}{r^{n}}\, k(r^{-1}x)$. Then, it follows $k_{r} \in C^{\infty}(\R^n,\R_+)$, $\text{ supp } k_{r}=B(0,r)$, and
\begin{align*}
\int_{\R^n} k_{r}(x) \,\di{x}=1.
\end{align*}
Moreover, to consider the convolution of a function $f \in C(\R^n,\R)$ and a mollifier $k_{r}$, let $U$ be an open subset of $\R^n$ and $U_{r} = \{ x \in U\, |\, d(x,\partial U) >r\}$. Then, the \emph{convolution}, denoted by $f_r: U_r \rightarrow \R$, is defined by
\begin{equation*}
x \mapsto f_{r}(x) := f \ast k_{r} \, (x) = \int_{B(0,r)} f(x-y)\, k_{r}(y)\, \di{y}.
\end{equation*}
By standard convolution results it follows $f_{r} \in C^{\infty}(U_{r},\R_+)$, see for instance \cite[Theorem 6 Appendix C.4]{evans-pde}. Furthermore, if $f$ is continuous in $U$, it holds $f_r \rightarrow f$ uniformly on compact subsets (u.o.c.) of $U$ as $r\rightarrow 0$. 
The \emph{Dini subderivative} of a function $f:U \rightarrow \R$ at $x \in U$ in the direction $v\in \R^n$ is defined by
\begin{align*}
 Df(x;v):= \liminf_{\e \rightarrow 0, v'\rightarrow v} \frac{f(x+\e v')-f(x)}{\e}.
\end{align*}
Let $T(x,\R_+^n)$ denote the \emph{contingent cone} to $\R^n_+$ at $x$ defined by
\begin{align*}
T\big(x,\R_+^n\big)= \left\{ \, v\in \R^n \,| \, \liminf_{\varepsilon \to 0} \; \frac{ d(x+\varepsilon v, \R_+^n) }{\e} \, =0\right\},
\end{align*}
with $d(x,K) = \inf\{ \|x-y\|\, | y \in K \}$.

\section{Statement of the main result}

We start this section by recalling an abstract definition of a dynamical system from \cite{walker1980}. A \emph{dynamical system} defined on a metric space $X$ is a continuous mapping $u\colon \R_+ \times X \to X$ such that $u(0,x)=x$ and
$$  u(t,u(s,x))=u(t+s,x) \quad \text{ for all } t,s \in \R_+,\,x \in X.$$
Recall that $x_* \in X$ is an equilibrium if $u(t,x_*)= x_*$ for all $t\geq 0$ and $x_*$ is said to be \emph{stable} if for every $\e>0$ there is a $\delta >0$ such that $d(x,x_*) < \delta$ implies that $ d\left(u(t,x),u(t,x_*) \right)< \e$ for all $t \geq0$. If in addition, there is a $M>0$ so that $d(x,x_*)< M$ implies that $\lim_{t \to \infty} d\left(u(t,x),u(t,x_*) \right) =0$, then $\vp_*$ is called \emph{asymptotically stable}.

Here we consider the metric space $\p \subset C(\R_+,\R_+^n)$ defined by the following properties: 
\begin{enumerate}
\item[$(a)$] There is a $L>0$ such that 
\begin{equation*}
\|\varphi(t)-\varphi(s)\| \leq L \, |t-s| \qquad \text{ for all } \vp \in \p, \, t,s \in \R_+.
\end{equation*}
\item[$(b)$] Scaling invariance: $\tfrac{1}{r}\,\varphi(r\,\cdot)  \in \p$ for all $\varphi \in \p,r>0$.
\item[$(c)$] Shift invariance: $\varphi(\cdot+t) \in \p$ for all $\varphi \in \p,t\geq0$.
\item[$(d)$] If a sequence $(\varphi_n)_{n \in \N}$ in $\p$ converges to $\varphi_*$ uniformly on compact sets, then $\varphi_* \in \p$.
\item[$(e)$] Concatenation property: For all $\varphi_1,\varphi_2 \in \p$ with $\varphi_1(t^*)=\varphi_2(0)$ for some $t^* \geq 0$ it holds $\varphi_1\diamond_{t^*}\varphi_2 \in \p$, where
\begin{equation*}
\varphi_1\diamond_{t^*}\varphi_2(t):= 
\begin{cases} 
\varphi_1(t) &\quad t \leq t^*, \\
\varphi_2(t-t^*) &\quad t \geq t^*.
\end{cases}
\end{equation*}
\item[$(f)$] There is a $T>0$ such that the set-valued map $ P: \R^n_+  \rightsquigarrow \p$ defined by
\begin{equation*}
 P(x)= \{ \varphi:[0,T] \to \R^n_+  \, | \, \varphi \in \p , \, \varphi(0)=x \}
\end{equation*}
is lower semicontinuous, i.e. for any $\vp \in P(x)$ and for any sequence of elements $(x_n)_{n \in \N} \in \R_+^n$ converging to $x$, there exists a sequence $(\vp_n)_{n \in \N}$ with $\vp_n \in P(x_n)$ converging to $\vp$ uniformly on compact sets.
\end{enumerate}

By condition (a) the functions $\vp \in \p$ are Lipschitz continuous with respect to a global Lipschitz constant. Condition (c) is in one-to-one correspondence to time-invariance of differential equations/inclusions. Condition (d) expresses that the set $\p$ is closed in the topology of  uniform convergence on compact sets. The concatenation property (e) and condition (f) imply that for $x\in \R^n_+$ the set of functions in $\p$ starting in $x$ depend lower semicontinuously on $x$, cf. \cite{questa-2012}.

In order to define the class of dynamical systems which will be considered in this paper, we equip the set $\p$ with the metric
\begin{align*}
d(\varphi_1,\varphi_2) := \max_{N \in \N}  \,  2^{-N}\frac{\|\vp_1 - \vp_2\|_N}{1 +\|\vp_1-\vp_2\|_N},  
\end{align*}
where $\| \vp\|_N:= \sup_{t \in [0,N]} \|\vp(t)\|$ so that convergence of functions is equivalent to uniform convergence of the corresponding restrictions on each compact subset of $\R_+$, cf. \cite{rudin1991functional}. Furthermore, we define the shift operator 
\begin{align*}
S(t) \colon C(\R_+,\R_+^n) \to C(\R_+, \R_+^n), \quad S(t) \varphi(\cdot) = \varphi(\cdot +t).
\end{align*}
In this paper, we consider the dynamical system on the metric space $\p \subset C(\R_+,\R^n_+)$ defined by the mapping 
\begin{align*}
 u \colon \R_+ \times \p\to \p, \qquad u(t,\vp)=S(t) \varphi(\cdot)=\vp(\cdot + t).
\end{align*}

Throughout the paper we call a function $\vp \in \p$ a \emph{trajectory} of $\p$. The zero trajectory $\varphi_*\equiv 0$ is the unique fixed point of the shift operator $S(t)$ defined on $\p$ and thus, $\varphi_*\equiv 0$ is the only equilibrium of the dynamical system.
The scope of this paper is to characterize asymptotic stability of the dynamical system $u$ defined on $\p$ in terms of the existence of a smooth Lyapunov function.

A pair $(V,W)$ of positive definite functions on $\R_+^n$ is called a \emph{Lyapunov pair} for the dynamical system $u$ defined on $\p$ if $V \colon \R_+^n \rightarrow \R_+$ is proper and for any $\vp \in \p$,
\begin{align}\label{fLF2}
V(\varphi(t)) - V(\varphi(s)) &\leq - \int_{s}^{t} \,W(\varphi(r))\,\di{r} \qquad \text{ for all }0\leq s \leq t \in \R_+.
\end{align}
The pair $(V,W)$ is called a \emph{$C^\infty$-smooth Lyapunov pair} if the functions $V$ and $W$ are $C^\infty$-smooth.
To formulate the decrease condition in a differential form let
\begin{align}\label{V-smooth-decrease}
\dot V(\vp(t)) := \lim_{h\rightarrow 0} \frac{V(\vp(t+h)) - V(\vp(t))}{h}.
\end{align}
The main result of the paper is the following.

\begin{theorem}\label{thm:smooth-conv-lyap}
Suppose the dynamical system $u$ defined on $\p$ satisfies:
\begin{itemize}
\item[\emph{(A)}] For any $\varphi \in \p$, $\e>0$, and $T>0$ there is a continuous function $c\colon [0,T] \to \R_+$ such that $\lim_{t \to 0} \tfrac{c(t)}{t}$ exists and is positive and for any $y \in \R^n$ with $\vp(0)-y \in B(\vp(0),\e) \cap \R_+^n$ there is a trajectory $\psi \in \p$ with $\psi(0)=\varphi(0)-y$ satisfying
\begin{align*}
\| \,\varphi(t) - y - \psi(t)\, \| &\leq   \|y\|\, c(t) \qquad \text{ for all } \, \, t \in [0,T]. 
\end{align*}
\end{itemize}
Then, the dynamical system $u$ defined on $\p$ is asymptotically stable if and only if there is a $C^{\infty}$-smooth Lyapunov pair $(V,W)$ such that for every $\varphi \in \p$ it holds
\begin{align}\label{DC}
\dot V(\varphi(t)) \leq  - W(\varphi(t)) \qquad \text{ for all }\, t \geq 0.
\end{align}
\end{theorem}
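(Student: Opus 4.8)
\emph{Sufficiency is the routine direction.} If $(V,W)$ is a $C^\infty$-smooth Lyapunov pair satisfying \eqref{DC}, I would note that for every $\vp \in \p$ the map $t \mapsto V(\vp(t))$ is locally Lipschitz (since $V$ is smooth and $\vp$ is $L$-Lipschitz by property~(a)), hence absolutely continuous with a.e.\ derivative $\dot V(\vp(t)) \le -W(\vp(t))$, so that integrating recovers \eqref{fLF2}. As $V$ is positive definite and proper, there are $\alpha_1,\alpha_2 \in \mathcal{K}_\infty$ with $\alpha_1(\|x\|) \le V(x) \le \alpha_2(\|x\|)$; monotonicity of $V \circ \vp$ then gives $\|\vp(t)\| \le \alpha_1^{-1}(\alpha_2(\|\vp(0)\|))$, which via property~(a) and the definition of $d$ is exactly stability of $\vp_* \equiv 0$, while $\int_0^\infty W(\vp(r))\di{r} \le V(\vp(0)) < \infty$ together with uniform continuity of $W \circ \vp$ (Barbalat) and positive definiteness of $W$ force $\|\vp(t)\| \to 0$, i.e.\ attractivity.

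\emph{For the converse, I would first extract a $\mathcal{KL}$-estimate and then build a merely continuous Lyapunov pair.} Properties~(a) and~(d) with Arzel\`a--Ascoli make the set of trajectories from each point nonempty, compact for uniform convergence on compacta, and locally uniformly bounded; the standard argument then upgrades asymptotic stability to the existence of $\beta$ of class $\mathcal{KL}$ and $r_0 > 0$ with $\|\vp(t)\| \le \beta(\|\vp(0)\|,t)$ whenever $\|\vp(0)\| \le r_0$. Next I would fix a $C^\infty$ positive definite $W_0$ decaying fast enough at the origin that $\int_0^\infty W_0(\beta(s,t))\di{t} < \infty$ for $s \le r_0$ (using Sontag's $\mathcal{KL}$-lemma), and set $V_0(x) := \sup\{\int_0^\infty W_0(\vp(t))\di{t} : \vp \in \p,\ \vp(0) = x\}$. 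Upper semicontinuity of $V_0$ (and attainment of the supremum) would follow from compactness of the trajectory set, uniform continuity of $W_0$ on bounded sets, and the uniform tail bound from the $\mathcal{KL}$-estimate; lower semicontinuity from property~(f), extended to all of $\R_+$ by the concatenation property~(e). Thus $V_0$ is continuous, and it is positive definite and proper because $\|\vp(t)\| \ge \|x\| - Lt$ by~(a); the concatenation~(e) and shift invariance~(c) give the integral decrease $V_0(\vp(s)) \ge \int_s^t W_0(\vp(r))\di{r} + V_0(\vp(t))$ for $0 \le s \le t$.

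\emph{The decisive step --- and where assumption~(A) is used --- is to upgrade $V_0$ to a locally Lipschitz function.} For $x$ and $x-y$ in a compact set I would take an optimal trajectory $\vp$ for $x$; by the $\mathcal{KL}$-estimate such trajectories enter an arbitrarily small ball around $0$ within a uniformly bounded time $T^*$, and on $[0,T^*]$ assumption~(A) furnishes a trajectory $\psi$ with $\psi(0) = x-y$ and $\|\vp(t)-y-\psi(t)\| \le \|y\|c(t)$. Extending $\psi$ by an optimal continuation and comparing $\int_0^\infty W_0(\vp)$ with $\int_0^\infty W_0(\psi)$, the tails are absorbed by the $\mathcal{KL}$-estimate and the part on $[0,T^*]$ is bounded by $\ell_{W_0}\|y\|\big(T^* + \int_0^{T^*}c(t)\di{t}\big)$, so $|V_0(x)-V_0(x-y)| \le \ell\,\|y\|$. (To make the effective horizon genuinely finite, hence the constant uniform, I would replace $W_0$ by countably many truncations vanishing near the origin and recombine them.) This is precisely the point that fails for general upper semicontinuous right-hand sides --- where one instead embeds into a Lipschitz inclusion --- so (A), a Lipschitz-type dependence of neighboring trajectories on their initial point with the additional requirement that $\lim_{t\to0}c(t)/t$ exists and is positive, is the intended replacement; I expect getting this step uniform on compacta (and the bookkeeping it needs) to be the main obstacle.

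\emph{Finally I would mollify, globalize, and handle the boundary.} Extend $V_0$ to $\R^n$ by $\hat V_0(x) := V_0(|x_1|,\dots,|x_n|)$, continuous, nonnegative and locally Lipschitz, and put $V^{(r)} := \hat V_0 \ast k_r \in C^\infty(\R^n)$. Writing $V^{(r)}(\vp(t+h)) - V^{(r)}(\vp(t)) = \int_{B(0,r)}[\hat V_0(\vp(t+h)-z) - \hat V_0(\vp(t)-z)]\,k_r(z)\di{z}$ and using $\vp(t) \in \R_+^n$ to replace $z$ by a coordinatewise reflection $z'$ with $\|z'\| \le \|z\|$ and $\vp(t)-z' \in \R_+^n$ (so $\hat V_0(\vp(t)-z) = V_0(\vp(t)-z')$), apply (A) to $S(t)\vp$ with increment $z'$: this yields a trajectory $\psi$ with $\psi(0) = \vp(t)-z'$ and $\|\vp(t+h)-z'-\psi(h)\| \le \|z'\|c(h)$. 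Combining the decrease of $V_0$ along $\psi$ from the previous step, the local Lipschitz constant $\ell$ of $V_0$, the positive limit $\kappa := \lim_{h\to0}c(h)/h$, and continuity of $W_0$ gives $\dot V^{(r)}(\vp(t)) \le \ell\kappa r - (W_0)_r(\vp(t))$, which is $\le -\tfrac12 W_0(\vp(t))$ for $r$ small on any compact region bounded away from $0$, since $(W_0)_r \to W_0$ u.o.c.\ and $W_0$ is positive definite. A locally finite cover of $\R_+^n \setminus \{0\}$ by such regions (with a neighborhood of $0$ treated separately), a smooth partition of unity, and $V^{(r)} \to V_0$ u.o.c.\ (to kill the cross terms) then assemble a global $V \in C^\infty(\R_+^n,\R_+)$, positive definite and proper, with $(V,\tfrac14 W_0)$ satisfying \eqref{DC}; the delicate coordinatewise bookkeeping near $\partial\R_+^n$ in the reflection step is exactly what the absolute-value extension is built to absorb.
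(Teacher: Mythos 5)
Your proposal is correct and follows essentially the same architecture as the paper: start from a continuous Lyapunov pair obtained by integrating a positive definite function along trajectories, use assumption~(A) to upgrade it to a locally Lipschitz function, mollify, extend past the boundary of the orthant by taking coordinatewise absolute values (with the coordinatewise ``reflection'' replaced in the paper by the continuous map $g$ with $\|g(y)\|=\|y\|$ and $\abs{\vp(t)-y}=\vp(t)-g(y)$), and glue with a partition of unity. The main divergence is upstream, in how the continuous candidate is built and why the integrals converge. You run the classical Clarke--Ledyaev--Stern machinery: extract a $\mathcal{KL}$-bound, apply Sontag's lemma to choose $W_0$, and then fight the infinite-time tail (you yourself flag the need for countably many truncations of $W_0$ to make the horizon effectively finite). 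The paper sidesteps all of this by exploiting the scaling property~(b): Lemma~\ref{prop:stab} (via Stolyar's Theorem~6.4) shows asymptotic stability is equivalent to a finite reach time $\tau\|\vp(0)\|$, so the value function from \cite{questa-2012} is simply $V(x)=\int_0^{\tau\|x\|}\|\vp(s)\|\,\di s$ for an optimal $\vp$ --- no $\mathcal{KL}$-lemma, no tail estimates, no truncation bookkeeping. The local-Lipschitz step also differs technically: you argue two-sidedly by swapping the roles of $x$ and $x-y$, while the paper invokes the Dini-subderivative criterion (Corollary~3.7 in \cite{clarke1993subgradient}) which only needs the one-sided bound $D(-V^e)(x;v)\leq M\|v\|$, together with the explicit finite integration bound $\|x\|\tau$ to control the case $\|x\|>\|x+\xi v'\|$. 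Both routes are sound; the paper's is shorter precisely because property~(b) is doing heavy lifting that your more general $\mathcal{KL}$-argument does not use.
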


\medskip
We note that estimates similar to (A) for trajectories of differential inclusions with state constraints were derived by Bressan, Facchi, Bettiol, Vinter and Rampazzo, cf. \cite{bressan2011} and the references therein. 

\section{Motivation and application of the main result}

For the purpose of this paper we give a very brief introduction to fluid networks. For a comprehensive description of multiclass queueing networks and fluid networks we refer to \cite{bramsonLN,chen,dai}. A fluid networks consists of $J\in \N$ stations serving $n\in \N$ different types of fluids with $J \leq n$ and each fluid type is served exclusively at a predefined station. This assignment defines the constituency matrix $ C\in \R^{J \times K}$ with $c_{jk} := 1$ if fluid type $k \in \{1,...,n\}$ is served at station $j\in\{1,...,J\}$ and $c_{jk}=0$ otherwise. The exogenous inflow rate of fluid type $k$ is denoted by $\alpha_k$ and $\alpha=(\alpha_1,...,\alpha_n)^\trans \in \R_+^n:=\{ x \in \R^n :  x_i\geq 0, \,  i =1,...,n\}$ is called the exogenous inflow rate. Likewise, $\mu_k \in \R_+$ denotes the potential outflow rate of type $k$ fluids and $\mu=(\mu_1,...,\mu_n)^\trans \in \R_+^n$. The substochastic matrix $P \in [0,1]^{n \times n}$ describes the transitions in the network, where it is assumed that the spectral radius of the matrix $P$ is strictly less than one, i.e. $1> \max\{|\lambda| \,|\, \exists\, x\neq 0 \,:\, Px=\lambda x\}$. The initial fluid level and the fluid level at time $t$  of the network are denoted by $x(0)$ and $x(t)$, respectively. We note that, as $x(t)$ described the deterministic analog of the queue length of the multiclass queueing network, the fluid level process $x(\cdot)$ evolves only in the positive orthant $\R_+^n$. 

The discipline, for instance First-In-First-Out (FIFO), determines the rule under which the individual stations of the fluid network are serving the different fluid types. More formally, the discipline specifies the allocation rate, denoted by $u =(u_1,...,u_n)^\trans\in \R_+^n$, where $u_k$ denotes the current amount of time that a predefined station allocates to serve fluids of type $k$. For the class of general work-conserving fluid networks, given $x \in \R_+^n$, the set of admissible allocation rates is
\begin{align}\label{U1}
 U(x)= \big\{ u \in \R^n \,\big| \, u \geq 0 ,\quad
e-Cu\geq0,\quad
(Cx)^\trans\cdot(e-Cu) = 0\big\},
\end{align}
where $e=(1,...,1)^\trans \in \R^J$ and the inequalities have to be understood component-by-component. The evolution of the fluid network can then be described by the following differential inclusion
\begin{align}\label{D1}
\dot x & \in G\big(x\big) :=\Big\{ \alpha -(I-P^\trans) M u \,\big|\, u \in U(x) \Big\} \, \cap \, T\big(x,\R^n_+\big),
\end{align}
where $M=\mbox{diag}(\mu)$ and $T(x,\R_+^n)$ denotes the contingent cone to $\R^n_+$ at $x$. The intersection with the contingent cone to the positive orthant is to ensure the nonnegativity of the solutions. Tackling a simple example we show

\begin{lemma}\label{lem:counter-ex}
The set-valued map $G$ defined in \eqref{D1} is not upper semicontinuous in general.
\end{lemma}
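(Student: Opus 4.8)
The plan is to exhibit a concrete fluid network — i.e. a concrete choice of the data $C$, $\alpha$, $\mu$, $P$ — for which the set-valued map $G$ from \eqref{D1} fails upper semicontinuity at some point $x_0 \in \R_+^n$. The natural source of the discontinuity is the constraint $(Cx)^\trans\cdot(e-Cu)=0$ in the definition \eqref{U1} of $U(x)$: when a coordinate $(Cx)_j$ is strictly positive this forces the $j$-th component of $e-Cu$ to vanish (station $j$ works at full capacity), but when $(Cx)_j=0$ that component is free to lie anywhere in $[0,1]$. Hence $U(x)$ — and therefore $G(x)$ — typically jumps \emph{up} in size as $x$ approaches the boundary face $\{(Cx)_j=0\}$, which is exactly the wrong direction for upper semicontinuity.

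Concretely I would take the simplest nontrivial case: $J=1$ station, $n=1$ fluid type (so $C=1$, $e=1$), no internal routing ($P=0$, hence $I-P^\trans=I$), service rate $\mu>0$ so $M=\mu$, and exogenous inflow $\alpha>0$. Then $U(x)=\{u\ge 0 : 1-u\ge 0,\ x(1-u)=0\}$. For $x>0$ this forces $u=1$, so $U(x)=\{1\}$ and the drift candidate is $\alpha-\mu$; at $x=0$ the constraint $x(1-u)=0$ is vacuous, so $U(0)=[0,1]$ and the drift candidate ranges over $[\alpha-\mu,\alpha]$. Intersecting with the contingent cone, $T(x,\R_+)=\R$ for $x>0$ and $T(0,\R_+)=\R_+$, one checks $G(x)=\{\alpha-\mu\}$ for $x>0$ while $G(0)=[\alpha-\mu,\alpha]\cap\R_+$, which contains $\alpha>\alpha-\mu$. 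Taking $x_n=1/n\to 0$, every $v\in G(0)$ with $v>\alpha-\mu$ satisfies $v\notin G(x_n)+\e B(0,1)$ once $\e<v-(\alpha-\mu)$, so no $\delta$ works in the definition of upper semicontinuity at $x_0=0$. (If one prefers a point on the interior of a boundary face rather than the origin, the same mechanism works with $n=2$, one coordinate bounded away from $0$ and the other tending to $0$; but the $n=1$ example already suffices for ``in general''.) I would also remark that $G$ \emph{is} lower semicontinuous here, consistent with property $(f)$ of $\p$, so the failure is genuinely only of the upper variety.

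The only point requiring a little care — and the step I expect to be the mild obstacle — is the bookkeeping around the intersection with $T(x,\R_+^n)$: one must verify that the intersection does not accidentally restore upper semicontinuity by trimming away the extra elements of $G(0)$. In the $n=1$ example this is immediate since $\alpha>0=\alpha-\mu+\mu\ge\alpha-\mu$ means $\alpha\in T(0,\R_+)=\R_+$; but in a higher-dimensional example one should pick the data so that the ``new'' drift directions appearing at the boundary point still lie in the contingent cone there, e.g. by choosing $\alpha$ strictly positive in the relevant coordinates. With that checked, the verification of the three defining conditions in \eqref{U1}, the computation of $U(x)$ for $x$ near and at the chosen point, and the explicit choice of the sequence $x_n$ and the separating $\e$ are all routine, and the lemma follows.
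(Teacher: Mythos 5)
Your example (a single station serving a single fluid type) is the same one the paper uses, but your verification has the inclusion running the wrong way, and that is not a cosmetic slip: it changes what you actually prove. Upper semicontinuity at $x=0$, as defined in the paper, requires $G(y)\subset G(0)+\e B(0,1)$ for $y$ near $0$. You instead check that some $v\in G(0)$ satisfies $v\notin G(x_n)+\e B(0,1)$, i.e. $G(0)\not\subset G(x_n)+\e B(0,1)$; this is a failure of \emph{lower}, not upper, semicontinuity (and, incidentally, it contradicts your own closing remark that $G$ is lower semicontinuous here). The heuristic with which you open — that $G$ ``jumps up in size'' near the boundary and that this is ``the wrong direction for upper semicontinuity'' — is also mistaken: if $G(x)\subset G(0)$ for all $x$ near $0$, then $G(x)\subset G(0)+\e B(0,1)$ holds trivially and upper semicontinuity at $0$ is automatic. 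Indeed, without the contingent-cone intersection your example \emph{is} upper semicontinuous at $0$, since then $G(x)=\{\alpha-\mu\}\subset[\alpha-\mu,\alpha]=G(0)$.

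What actually breaks upper semicontinuity is the intersection with $T(0,\R_+)=\R_+$ together with a condition you never impose, namely $\mu>\alpha$ (the paper fixes $\mu=\alpha+1$). With $\mu>\alpha$ one has $\alpha-\mu<0$, so $G(x)=\{\alpha-\mu\}$ for $x>0$ but $G(0)=[\alpha-\mu,\alpha]\cap\R_+=[0,\alpha]$: the negative drift value is \emph{removed} at $x=0$ by the cone, not added. Then for $x_n=1/n$ and $\e<\mu-\alpha$ one has $G(x_n)=\{\alpha-\mu\}\not\subset G(0)+\e B(0,1)$, which is the correct failure; equivalently $(x_n,\alpha-\mu)\to(0,\alpha-\mu)\notin\operatorname{graph}(G)$, so the graph is not closed, which is the route the paper takes. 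If instead $\alpha-\mu\ge 0$, then $\alpha-\mu\in G(0)$ and upper semicontinuity at $0$ holds, so your example without the constraint $\mu>\alpha$ simply does not establish the lemma. The ``mild obstacle'' you flag — whether the cone intersection might restore upper semicontinuity — is therefore exactly backwards: the cone is what destroys it.
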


\begin{proof}
To show the claim we consider a single station fluid network  serving one type of fluid. That is, for  $\alpha >0$, $\mu=\alpha +1$, and $P=0$ the differential inclusion \eqref{D1} is defined by set-valued map
\begin{equation}\label{def:F-example}
\begin{split}
 G(x) &=\left\{ 
\alpha - (\alpha +1) \, u
\quad \big| \quad
0\leq  u \leq  1, \quad x(1-u)= 0   
\right\} \cap T(x,\R_+)  \\
&=\left\{ 
\alpha - (\alpha +1) \, u
\quad \big| \quad
0\leq  u \leq  1, 
  u = 1  \text{ if } x>0  \text { and } u= \tfrac{\alpha}{\alpha + \delta}  \text{ else } 
 \right\} .
\end{split}
\end{equation}
To conclude that $G$ is not upper semicontinuous let $x=0$ and consider the sequence $(x_k)_{k\in \mathbb{N}}$ with $x_k= \tfrac{1}{k}$. Then, for each $k \in \N$ we have $G(x_k)= -1$. Let $\operatorname{graph}(G) :=\{ \, (x,y) \in \mathbb{R}^n_+ \times \R^n \,\big|\, y \in G(x) \, \}$ denote the graph of $G$ and consider the sequence $\big(x_k,-1\big)_{k \in \N}$ on the graph of $G$ 
which converges to $\big(0,-1\big) \not \in \operatorname{graph}(G)$. Hence, the graph is not closed and by Proposition~2 in \cite[Section~1.1]{aubin-cellina}  the set-valued map $G$ is not upper semicontinuous. 
\end{proof}

We note that the differential inclusion defined by \eqref{def:F-example} is strongly asymptotically stable. However, as a consequence of Lemma~\ref{lem:counter-ex}, the existence of a $C^\infty$-smooth Lyapunov pair cannot be concluded by the results on differential inclusions mentioned above.

\medskip
We use the main result Theorem~\ref{thm:smooth-conv-lyap} to conclude that the differential inclusion \eqref{def:F-example} admits a $C^\infty$-smooth Lyapunov pair. It is well known that a single station general work-conserving fluid network serving only one fluid type satisfies the properties (a)-(f); cf. \cite{chen,questa-2012,stolyar95}. To conclude the existence of a $C^\infty$-smooth Lyapunov function we will apply the main result of the paper by interpreting the zero solution $\vp_*\equiv 0$ of the differential inclusion \eqref{D1} as the fixed point of the shift operator defined on the set of solutions to \eqref{D1}.

\begin{theorem}\label{lem:ex-hypo-yes}
The differential inclusion \eqref{def:F-example} admits a $C^\infty$-smooth Lyapunov pair, i.e. there is a $C^\infty$-smooth and positive definite pair $(V,W)$ such that $V$ is proper and
\begin{align}\label{ineq:V-decrease}
\max_{v \in G(x)} \langle \nabla V(x), v \rangle \leq - W(x)\qquad \text{ for all }\, x \in (0,\infty).
\end{align}
\end{theorem}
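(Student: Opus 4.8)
The plan is to deduce the statement from the abstract converse theorem, Theorem~\ref{thm:smooth-conv-lyap}, applied to the shift semiflow on the solution set of \eqref{def:F-example}, and then to rewrite the differential decrease condition \eqref{DC} in the form \eqref{ineq:V-decrease}. The preparatory observations are elementary. Since $G(x)=\{-1\}$ for every $x>0$ and solutions must remain nonnegative, every solution of \eqref{def:F-example} is a ramp $\varphi_a(t)=(a-t)^+$ with $a=\varphi(0)\ge 0$, and conversely each such ramp is a solution; hence the solution set is $\p=\{\varphi_a : a\ge 0\}$. That a single-station work-conserving fluid network serving one fluid type satisfies (a)--(f) is classical, see \cite{chen,questa-2012,stolyar95}, so $u(t,\varphi)=\varphi(\cdot+t)$ is a well-defined semiflow on $\p$ with unique equilibrium $\varphi_*\equiv 0$. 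Since $\|\varphi_a\|_N=a$ for every $N$, the distance $d(\varphi_a,\varphi_*)$ is a strictly increasing function of $a$ that vanishes at $a=0$, while $d(u(t,\varphi_a),\varphi_*)$ is that same function evaluated at $(a-t)^+$; it is therefore nonincreasing in $t$ and tends to $0$ as $t\to\infty$, so $\varphi_*$ is (globally) asymptotically stable.

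The heart of the argument is the verification of assumption~(A). Fix $\varphi=\varphi_a$, $\varepsilon>0$ and $T>0$; the condition $\varphi(0)-y\in B(\varphi(0),\varepsilon)\cap\R_+$ reads $a-y\ge 0$, $|y|\le\varepsilon$, and the only admissible comparison trajectory is $\psi=\varphi_{a-y}$, the unique element of $\p$ with $\psi(0)=\varphi(0)-y$. Writing $s\mapsto s^+$ for the positive part, a direct check shows that
\[
\varphi(t)-y-\psi(t)=(a-t)^+-(a-y-t)^+-y
\]
vanishes on $[0,\min\{a,\,a-y\}]$ and that $|\varphi(t)-y-\psi(t)|\le|y|$ on all of $[0,T]$. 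Hence the quotient $|\varphi(t)-y-\psi(t)|/|y|$ is at most $1$ and, near the origin, is dominated by a fixed linear function of $t$: for $a>0$ one may take $c(t)=t/\rho$ with $\rho=a$ when $a\le\varepsilon$ and $\rho=a-\varepsilon$ when $a>\varepsilon$, so that $c(t)/t\equiv\rho^{-1}>0$, while the equilibrium $\varphi_*\equiv 0$ forces a constant $c$ and has to be discussed separately. \emph{This is the step I expect to be the main obstacle:} the estimate must hold uniformly over all admissible $y$ while $c$ is simultaneously constrained at $t=0$, so the comparison of the two ramps — and especially its behaviour as $\varphi(0)$ approaches the equilibrium — must be carried out with care.

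Once (A) is in place, Theorem~\ref{thm:smooth-conv-lyap} supplies a $C^\infty$-smooth Lyapunov pair $(V,W)$ — so $V$ is $C^\infty$, proper and positive definite, and $W$ is $C^\infty$ and positive definite — with $\dot V(\varphi(t))\le-W(\varphi(t))$ for every $\varphi\in\p$ and every $t\ge 0$. It then remains to rewrite this as \eqref{ineq:V-decrease}. Fix $x\in(0,\infty)$ and apply the decrease inequality to $\varphi_x$; since $\varphi_x(t)=x-t$ on $[0,x)$ and $V$ is $C^\infty$, the chain rule gives
\[
\dot V(\varphi_x(0))=\lim_{h\to 0}\frac{V(x-h)-V(x)}{h}=\langle\nabla V(x),-1\rangle .
\]
As $G(x)=\{-1\}$ for every $x>0$, the right-hand side equals $\max_{v\in G(x)}\langle\nabla V(x),v\rangle$; evaluating $\dot V(\varphi_x(t))\le-W(\varphi_x(t))$ at $t=0$ then yields $\max_{v\in G(x)}\langle\nabla V(x),v\rangle\le-W(x)$, which is precisely \eqref{ineq:V-decrease}. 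One can also verify the conclusion directly (for instance with $V(x)=x$), but the point is to illustrate the applicability of Theorem~\ref{thm:smooth-conv-lyap}.
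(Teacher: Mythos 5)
Your proposal follows exactly the route taken in the paper: verify assumption~(A) for the set of solutions of \eqref{def:F-example}, invoke Theorem~\ref{thm:smooth-conv-lyap} to produce a $C^\infty$ Lyapunov pair $(V,W)$, and then translate the trajectory-wise decrease \eqref{DC} into the pointwise inequality \eqref{ineq:V-decrease} by applying the chain rule to $t\mapsto V(\varphi_x(t))$ at $t=0$ (which you do more explicitly than the paper, and correctly). Your reduction of the solution set to the ramps $\varphi_a(t)=(a-t)^+$ and the identification of $\psi=\varphi_{a-y}$ as the only admissible comparison trajectory are also exactly what the paper does.

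There is, however, a step you flag as the ``main obstacle'' and then leave open, and it is indeed where the paper has to do something extra: verifying (A) when $\varphi=\varphi_*\equiv 0$. In that case $|\varphi(t)-y-\psi(t)|=\min\{t,|y|\}$, which is $\le |y|$ but not $\le |y|\,c(t)$ for any $c$ vanishing at $0$, uniformly over small $|y|$. The paper's fix is to take $c(t)=\log(t+\operatorname{e})$, for which $\log(t+\operatorname{e})\ge 1$ gives $\min\{t,|y|\}\le |y|\le |y|\log(t+\operatorname{e})$; note that this $c$ has $c(0)=1$, so $c(t)/t\to+\infty$, and whether it literally meets the reading of assumption~(A) used later (where $c_0=\lim c(t)/t$ enters the denominator of $r_0$ in Lemma~\ref{lem:local-conv}) is a point worth being aware of. For the case $\varphi(0)=a>0$ your extra caution with $\rho=a-\varepsilon$ is unnecessary: the paper shows, and a short case check confirms, that $c(t)=t/a$ already gives $|\varphi(t)-y-\psi(t)|\le\tfrac{1}{a}|y|\,t$ for \emph{all} admissible $y$ with $a-y\ge 0$, including $y=a$ (where $\psi\equiv 0$) and $y<0$, with equality at $t=a$. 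So the bound is uniform in $y$ without shrinking $\rho$. In short: same method, one missing sub-case, and the estimate for $a>0$ can be sharpened to match the paper's.
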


\begin{proof}
To show the existence of a $C^\infty$-smooth Lyapunov pair we verify that the set of solutions $\mathcal{S}_G$ to the differential inclusion \eqref{def:F-example} satisfies the assumption~(A). 

Let $\vp \in \mathcal{S}_G$, $\e >0$ and $T>0$ be fixed. In a first step, we treat the case that $\vp(0)>0$. Then, we have 
\begin{align*}
\vp(t) =
\begin{cases} 
   \vp(0) -  t        & \text{ if } t \leq \vp(0)\\
 0 &\text{ else}.
\end{cases}
\end{align*}
In the case $y=\vp(0)$ the only solution $\psi$ to the differential inclusion starting in $\psi(0)=\vp(0) - y=0$ is the zero solution and we obtain $| \vp(t) - y - \psi(t)| =|t|$ for all $ t \leq \vp(0)$ and 
$| \vp(t) - y - \psi(t)| = |y|$ otherwise. Hence, one has
\begin{align*}
| \vp(t) - y - \psi(t)| \leq \frac{1}{\vp(0)}\, |y|\, t \qquad \text{ for all } t\geq 0.
\end{align*}
If $y\neq \vp(0)$ we consider the solution $\psi$ to the differential inclusion starting in $\psi(0)=\vp(0) - y$ given by
\begin{align*}
\psi(t) =
\begin{cases} 
   \vp(0) -y -  t        & \text{ if } t \leq \vp(0)-y\\
 0 &\text{ else}.
\end{cases}
\end{align*}
On one hand, if $0<\vp(0) -y<\vp(0)$ it follows $| \vp(t) - y - \psi(t)| =0$ for all $ t \in [0, \vp(0)-y]$. Also, for all $ t \in [\vp(0)-y, \vp(0)]$ we have $| \vp(t) - y - \psi(t)| =| \vp(0)-y-  t|$ and for all $t\geq \vp(0)$ one has $| \vp(t) - y - \psi(t)| =|y|$. 
On other hand, if $0<\vp(0)\leq \vp(0) -y$ we have $| \vp(t) - y - \psi(t)| =0$ for all $ t \in [0, \vp(0)]$. Further, for all $ t \in [\vp(0),\vp(0)-y]$ one has $| \vp(t) - y - \psi(t)| =| \vp(0)-y- t|$ and for all $t\geq \vp(0)-y$ it follows $| \vp(t) - y - \psi(t)| =|y|$. Consequently, in either case one obtains
\begin{align*}
| \vp(t) - y - \psi(t)| \leq \frac{1}{\vp(0)}\, |y|\, t \qquad \text{ for all } t\geq 0.
\end{align*}
Finally, we consider the case $\vp(0)=0$. Then, we have $\vp \equiv 0$. For $y \in (-\e, 0]$ a solution $ \psi$ of the differential inclusion with $\psi(0)=-y$ is
\begin{align*}
\psi(t) =
\begin{cases} 
   -y -  t        & \text{ if } t \leq -y\\
 0 &\text{ else}.
\end{cases}
\end{align*}
Therefore, $| \vp(t) - y - \psi(t)| =t$ for all $ t \in [0, -y]$ and $| \vp(t) - y - \psi(t)| =|y|$ for all $ t \geq -y$ and one has 
\begin{align*}
 | \vp(t) - y - \psi(t)|\leq \log(t+\operatorname{e})\, |y| \qquad \text{ for all } t\geq 0.
\end{align*}
Thus, there is a $C^\infty$-smooth Lyapunov pair $(V.W)$ such that for every solution $\vp \in \mathcal{S}_G$ one has
\begin{align*}
\dot V( \vp(t)) \leq - W(\vp(t))  \qquad \text{ for all } t\geq 0.
\end{align*}
Consequently, the pair $(V,W)$ satisfies
\begin{align*}
\max_{v \in G(x)} \langle V(x),v\rangle  \leq - W(x)  \qquad \text{ for all } x \in (0,\infty).
\end{align*}
This shows the assertion.
\end{proof}

\section{Relation to differential inclusions}

Due to the fact that Clarke, Ledyaev and Stern \cite{clarke98} as well as Dupius and Williams \cite{DP94} embed the set-valued map defining the differential inclusion into a Lipschitz one, we consider the differential inclusion
\begin{align}\label{eq:Li-di}
 \dot x(t) \in F(x(t)),
\end{align}
where $F:\mathbb{R}^n \rightsquigarrow \mathbb{R}^n$ is Lipschitz continuous, i.e. there is a constant $L>0$ such that
\begin{align*}
 F(x) \subset F(y) +  L\,\|x-y\| \,B(0,1) \qquad \text{ for all }\, x,y\in \R^n,
\end{align*}
and $F(x)$ is nonempty, compact and convex for every $x \in \R^n$. Let  $\mathcal{S}_F(x)$ denote the set of solutions $\varphi\colon \R_+ \to \R^n$ to \eqref{eq:Li-di} with $\varphi(0)=x$. Let $\mathcal{S}_F:= \{ \mathcal{S}_F(x)\, |\, x \in \mathbb{R}^n \}$.
Obviously, the mapping 
\begin{align}\label{def:di-ads}
u: \mathbb{R}_+ \times \mathcal{S}_F \to \mathcal{S}_F ,  \quad u(t,\varphi) := \varphi(t+\cdot)
\end{align}
defines a dynamical system on $\mathcal{S}_F$. 

Next we show that condition~(A) is a natural assumption which is satisfied by a wide class of dynamical systems. To this end, we investigate assumption~(A) from differential inclusions perspective and show that it is automatically fulfilled for differential inclusions defined by a Lipschitz continuous set-valued map.

\begin{theorem}\label{lem:DI-H}
Let $F$ be a Lipschitz continuous set-valued map taking nonempty, compact and convex values with $0 \in F(0)$. Then, the dynamical system $u$ defined on $\mathcal{S}_F$ by \eqref{def:di-ads} satisfies assumption~\emph{(A)}.
\end{theorem}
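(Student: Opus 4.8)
The plan is to verify assumption (A) directly for solutions of \eqref{eq:Li-di} by a Filippov-type approximation argument. Fix $\varphi \in \mathcal{S}_F$, $\varepsilon>0$, $T>0$, and a point $y\in\R^n$ with $\varphi(0)-y\in B(\varphi(0),\varepsilon)$; write $x_0=\varphi(0)$ and $z_0=x_0-y$, so $\|z_0-x_0\|=\|y\|$. I would like to produce $\psi\in\mathcal{S}_F$ with $\psi(0)=z_0$ and $\|\varphi(t)-y-\psi(t)\|\le \|y\|\,c(t)$ on $[0,T]$, where $c$ is continuous with $\lim_{t\to0}c(t)/t$ existing and positive. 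The natural candidate for $c$ is $c(t)=K(e^{Lt}-1)$ for a suitable constant $K$ depending only on $L$ (and not on $\varphi$, $y$, or $\varepsilon$): this is continuous, vanishes at $0$, and $c(t)/t\to KL>0$ as $t\to0$, so the required limit condition holds automatically. The crux is therefore the Gronwall-type estimate comparing $\varphi$ shifted by $y$ with a genuine solution starting at $z_0$.

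The first step is to observe that the shifted curve $\tilde\varphi(t):=\varphi(t)-y$ is an approximate solution of \eqref{eq:Li-di}: $\dot{\tilde\varphi}(t)=\dot\varphi(t)\in F(\varphi(t))=F(\tilde\varphi(t)+y)\subset F(\tilde\varphi(t))+L\|y\|B(0,1)$ for a.e.\ $t$, using the Lipschitz property of $F$. Hence $\tilde\varphi$ is a solution of the differential inclusion with right-hand side inflated by the constant $L\|y\|$, and $\operatorname{dist}(\dot{\tilde\varphi}(t),F(\tilde\varphi(t)))\le L\|y\|$ a.e. The second step is to invoke the Filippov approximation lemma (or prove it by hand via a measurable selection of the nearest point in $F(\psi(t))$ to $\dot{\tilde\varphi}(t)$, solving the resulting ODE by Picard iteration since $F$ Lipschitz gives existence): there is a solution $\psi\in\mathcal{S}_F(z_0)$ with
\begin{align*}
\|\tilde\varphi(t)-\psi(t)\| \le e^{Lt}\|\tilde\varphi(0)-\psi(0)\| + \int_0^t e^{L(t-s)}\,L\|y\|\,\di s.
\end{align*}
Since $\tilde\varphi(0)=z_0=\psi(0)$, the first term vanishes and the integral evaluates to $\|y\|(e^{Lt}-1)$, giving $\|\varphi(t)-y-\psi(t)\|\le \|y\|(e^{Lt}-1)$ on $[0,T]$. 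So $c(t)=e^{Lt}-1$ works, with $c(t)/t\to L>0$.

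The one genuinely delicate point to attend to is that assumption (A) asks for $\psi\in\p$ in the sense that its trajectory lies in the relevant space; here $\p=\mathcal{S}_F$ and $F$ is defined on all of $\R^n$ with no state constraint, so the Filippov solution $\psi$ is automatically an element of $\mathcal{S}_F(z_0)\subset\mathcal{S}_F$ and nothing further is needed — this is exactly where the unconstrained Lipschitz setting is easier than the constrained fluid-network case flagged after Theorem~\ref{thm:smooth-conv-lyap}. I would also note that the hypothesis $0\in F(0)$ is not used in the estimate itself; it is only there to guarantee that $\varphi_*\equiv 0$ is an equilibrium so that the statement of (A) is about the right object. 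The main obstacle, then, is not conceptual but bookkeeping: stating and applying Filippov's lemma cleanly (including the measurable-selection step that produces $\psi$ and the verification that the comparison function $c$ meets the positivity-of-$c(t)/t$ requirement), which the above outline reduces to a short citation plus a one-line Gronwall computation.
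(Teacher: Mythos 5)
Your proof is correct and follows essentially the same route as the paper: observe that the shifted curve $\varphi(\cdot)-y$ has Filippov defect $\operatorname{dist}(\dot\varphi(t),F(\varphi(t)-y))\le L\|y\|$ by Lipschitz continuity of $F$, then apply Filippov's theorem to produce $\psi$ with $\psi(0)=\varphi(0)-y$ and $\|\varphi(t)-y-\psi(t)\|\le\|y\|(e^{Lt}-1)$, so $c(t)=e^{Lt}-1$ works with $c(t)/t\to L>0$. The extra remarks (that $\psi\in\mathcal{S}_F$ automatically in the unconstrained setting, and that $0\in F(0)$ is not used in the estimate) are accurate but supplementary.
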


\begin{proof}
Let $\varphi \in \mathcal{S}_F$, $\varepsilon>0$ and $T>0$. We define $c(t):=\operatorname{e}^{Lt}-1$. Then, for $y \in \mathbb{R}^n$ the function $\varphi_y(\cdot):= \varphi(\cdot)-y$ is absolutely continuous with $\varphi_y(0)=\varphi(0)-y$. Further, as $F$ is Lipschitz it holds $F(\varphi_y(t)) \subset F(\varphi(t)) + L \,\|y\|\,  B(0,1)$ and we have
\begin{align*}
d(\dot \varphi_y(t),F(\varphi_y(t)) = d(\dot \varphi(t),F(\varphi_y(t))\leq L\, \|y\|.
\end{align*}
Thus, by Filippov's Theorem \cite[Theorem~1 in Chapter~2, Section~4]{aubin-cellina} there is a solution $\psi(\cdot)$  to \eqref{eq:Li-di} defined on the interval $[0,T]$ with $\psi(0)=\varphi_y(0)=\varphi(0)-y$ satisfying
\begin{align*}
\| \varphi_y(t)-  \psi(t) \| = \| \varphi(t) -y -  \psi(t) \|  \leq  \|y\| \left( \operatorname{e}^{Lt} -1\right)\quad \text{ for all } t \in [0,T].
\end{align*}
This shows the assertion.
\end{proof}


\section{Proof of the main result} \label{sec:proof}
%
Before proving Theorem~\ref{thm:smooth-conv-lyap} we present a useful characterization of asymptotic stability for the class of dynamical systems under consideration which is based on the scaling property (b) and the shift property (c).

\begin{lemma}\label{prop:stab}
The dynamical system $u$ defined on $\p$ is asymptotically stable if and only if there is a $\tau >0$ such that $u(\|\varphi(0)\|\tau,\vp) =  0$ for all $\varphi \in \p$.
\end{lemma}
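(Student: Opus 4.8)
The plan is to split Lemma~\ref{prop:stab} into its two implications, with almost all of the work in the ``only if'' direction, and I will phrase the ``if'' direction so that it is essentially a one-line consequence of property~(a) and the explicit form of the metric~$d$.

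\textbf{The easy direction ($\Leftarrow$).} Suppose $\tau>0$ is as claimed; write $\vp_*\equiv0$ for the equilibrium. Fix $\vp\in\p$. Since $\vp(s)=0$ for all $s\ge\|\vp(0)\|\tau$, property~(a) gives $\|\vp(s)\|=\|\vp(s)-\vp(\|\vp(0)\|\tau)\|\le L\tau\|\vp(0)\|$ for $s\le\|\vp(0)\|\tau$, hence $\sup_{s\ge0}\|\vp(s)\|\le L\tau\|\vp(0)\|$; the same bound applies to every shift $u(t,\vp)$, so from the definition of $d$ one reads off $d(u(t,\vp),0)\le\sup_{s\ge0}\|\vp(s)\|\le L\tau\|\vp(0)\|$ for all $t\ge0$. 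Combined with the elementary lower bound $d(\vp,0)\ge\tfrac12\,\|\vp(0)\|/(1+\|\vp(0)\|)$, which shows that $d(\vp,0)$ small forces $\|\vp(0)\|$ small, this yields stability of $\vp_*$; attractivity (with any $M>0$) is immediate, since $u(t,\vp)=0$, hence $d(u(t,\vp),0)=0$, for all $t\ge\|\vp(0)\|\tau$.

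\textbf{The converse ($\Rightarrow$), reduction.} Assume $\vp_*$ is asymptotically stable and, for $\vp\in\p$, introduce the extinction time $\theta(\vp):=\inf\{t\ge0:\ u(t,\vp)=0\}\in[0,\infty]$; continuity of trajectories together with property~(d) shows this infimum is attained, i.e.\ $\vp(s)=0$ for all $s\ge\theta(\vp)$ whenever $\theta(\vp)<\infty$. The scaling property~(b) gives $\theta\big(\tfrac1r\vp(r\cdot)\big)=\theta(\vp)/r$ and the shift property~(c) gives $\theta\big(\vp(\cdot+a)\big)=(\theta(\vp)-a)^+$. Using these two identities, the desired conclusion ``there is $\tau>0$ with $u(\|\vp(0)\|\tau,\vp)=0$ for all $\vp\in\p$'' follows once one produces a single constant $T_0>0$ with
\[
\theta(\vp)\le T_0\qquad\text{for every }\vp\in\p\text{ with }\|\vp(0)\|\le1 :
\]
rescaling by $r=\|\vp(0)\|$ yields $\theta(\vp)\le\|\vp(0)\|T_0$ when $\vp(0)\ne0$, while for $\vp(0)=0$ the rescaled trajectories $\tfrac1r\vp(r\cdot)$ again start at $0$, so $\theta(\vp)/r\le T_0$ for every $r>0$ and hence $\theta(\vp)=0$, i.e.\ $\vp\equiv0$. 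Thus one may take $\tau=T_0$, and the whole problem reduces to this uniform extinction bound.

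\textbf{The main obstacle.} Producing $T_0$ is the heart of the matter, and I would argue by contradiction, from a sequence $\vp_n\in\p$ with $\|\vp_n(0)\|\le1$ and $\theta(\vp_n)\to\infty$. The strategy is to combine: asymptotic stability (stability keeps the relevant orbits bounded, attractivity pulls them toward~$0$); the compactness coming from the equi-Lipschitz bound~(a) and closedness~(d) (Arzel\`a--Ascoli); the homogeneity coming from the scaling~(b) and shift~(c) invariances; and the lower-semicontinuity~(f), which forbids trajectories from running off to infinity. Concretely, I would pick ``last appreciable times'' $s_n\to\infty$ with $\|\vp_n(s_n)\|=\sup_{t\ge s_n}\|\vp_n(t)\|=:M_n>0$, rescale to $\psi_n:=\tfrac1{M_n}\vp_n(s_n+M_n\,\cdot)\in\p$ (using (b),(c)), so that $\|\psi_n(0)\|=1$ and $\sup_{t\ge0}\|\psi_n(t)\|\le1$, pass to a uniform-on-compacts limit $\psi_\infty\in\p$ (Arzel\`a--Ascoli and~(d)) with $\|\psi_\infty(0)\|=1$, and then show that the homogeneity forces $\psi_\infty$ to be a nonzero time-invariant state, contradicting that $\vp_*\equiv0$ is the only equilibrium. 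I expect the delicate point to be exactly this last step: extracting the limiting object and rigorously verifying that it is genuinely stationary (equivalently, that the $\vp_n$ cannot both start in the unit ball and fail to extinguish in bounded time). Everything else — the easy direction and the reduction of the converse to the bound $T_0$ — is routine estimation with the metric $d$, the Lipschitz constant $L$, and the invariance properties.
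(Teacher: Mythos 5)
Your easy direction ($\Leftarrow$) and your reduction of the converse to a uniform extinction bound $T_0$ on $\{\vp\in\p:\|\vp(0)\|\le1\}$ are both correct and match the paper's outline: the paper establishes stability and attractivity with a very similar chain of estimates (using $\lceil L\tau\rceil$ where you use $L\tau$), and in the converse direction it likewise uses the scaling property~(b) to reduce to trajectories with $\|\vp(0)\|=1$.

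The real content of the lemma is the uniform bound $T_0$, and this is where your proposal has a genuine gap. The paper does not prove this from scratch: after observing that asymptotic stability forces $\inf_{t\ge0}\|\vp(t)\|=0$ for every $\vp\in\p$ with $\|\vp(0)\|=1$, it simply invokes Theorem~6.4 of Stolyar \cite{stolyar95}, a nontrivial compactness/homogeneity result from the fluid--networks literature that upgrades this pointwise condition to a uniform extinction time for the entire family. Your sketched contradiction argument would not close that gap as written. After rescaling to $\psi_n:=\tfrac1{M_n}\vp_n(s_n+M_n\,\cdot)$ you correctly obtain a u.o.c.\ limit $\psi_\infty\in\p$ with $\|\psi_\infty(0)\|=1$ and $\sup_{t\ge0}\|\psi_\infty(t)\|\le1$, but there is no reason for $\psi_\infty$ to be stationary, and in fact it cannot be: since $\psi_\infty$ is bounded, rescaling by $\frac1r\psi_\infty(r\,\cdot)$ with $r$ large pushes it into the basin of attraction, and attractivity then gives $\psi_\infty(t)\to0$ as $t\to\infty$, whereas $\|\psi_\infty(0)\|=1$. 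The two properties you do extract, $\|\psi_\infty(0)\|=1$ and $\sup_t\|\psi_\infty(t)\|\le1$, are perfectly compatible with $\psi_\infty$ merely decaying inward from the unit sphere, so the hoped-for contradiction with uniqueness of the equilibrium never materializes. To make the step rigorous one would need a different mechanism (roughly, how the blow-up $\theta(\vp_n)\to\infty$ interacts with (b),~(c), closedness~(d), and lower semicontinuity~(f) under passage to limits) rather than a stationary-limit argument, or one should simply cite Stolyar's Theorem~6.4 as the paper does.
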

\begin{proof}
Suppose there is a $\tau < \infty$ such that $u(\|\varphi(0)\|\tau,\vp) =  0$ for all $\varphi \in \p$. To conclude stability let $\e>0$ and $\delta:=\tfrac{\e}{\lceil L\tau \rceil}$, where for $x\in \R$ let $\lceil x \rceil := \min\{k\in \Z\,| \, k\geq x\}$. Let $\varphi \in \p$ with $d(0,\varphi) < \delta$. By assumption, using the scaling and shift property it holds 
\begin{align}\label{1}
\varphi(s+t)= 0  \quad \text{ for all } t \geq \tau\,\|\vp(s)\|.
\end{align}
Together with the Lipschitz continuity of $\varphi$ for every $t \in [0,\tau\|\vp(s)\|]$ we have
\begin{align}\label{2}
 \|\varphi(s+t)\| =
\| \, \varphi(s+t)- \varphi\big(s+\tau \|\varphi(s)\|\big) \,\| \leq L\, \big| t - \tau \|\varphi(s)\| \, \big| 
\leq  L\tau \|\varphi(s)\|.
\end{align}
By \eqref{1} we conclude that \eqref{2} holds for all $t \geq 0$. Therefore,
\begin{align*}
 \|\varphi(\cdot+t)\|_N=  
\sup_{s \in [0,N]} \| \varphi(s+t)\| 
\leq \lceil L\tau \rceil \|\varphi\|_N =  \| \lceil L\tau \rceil \varphi\|_N .
\end{align*}
In turn, by the triangular inequality, we have
\begin{align*}
d\big(0, u(t,\varphi)\big) \leq \max_{N\in \N} \frac{1}{2^N} \frac{\| \lceil L\tau \rceil \varphi\|_N }{1+\| \lceil L\tau \rceil \varphi\|_N } = d\big(0, \lceil L\tau \rceil \varphi \big) 
\leq \lceil L\tau \rceil d\big(0, \varphi \big) < \e.
\end{align*}
By assumption, it holds $\vp\big(\|\vp(0)\|\tau +t\big) =0$ for all $t\geq 0$. This in turn implies $\lim_{t \rightarrow \infty} d\big(0, u(t,\varphi)\big) =0$ and we have attractivity.

Conversely, let $\vp_*\equiv 0$ be asymptotically stable. Due to the scaling property it suffices to consider trajectories $\vp$ with $\|\vp(0)\|=1$. Then, as 
\[
 \lim_{t \to \infty} d(0,u(t,\vp)) = \lim_{t\to \infty} d(0,\vp(t+\cdot)) =0 
\]
we have
$$ \lim_{t\to \infty} \|\vp(t)\|=0 \quad \text{ for all } \vp \in \p.$$ 
Hence, $\inf\{ \|\vp(t)\| \,|\, t \geq 0\}=0$ for any $\vp \in \p$ with $\|\vp(0)\|=1$. The assertion then follows from Theorem~6.4 in \cite{stolyar95}.
\end{proof}


{\bf Proof of Theorem~\ref{thm:smooth-conv-lyap}}. In \cite[Theoerm~2]{questa-2012} it is shown that for the dynamical system under consideration there is a continuous Lyapunov pair if and only if the dynamical system is asymptotically stable. Thus, the non-converse implication is already shown.

Conversely, let the dynamical system $u$ defined on $\p$ be asymptotically stable. Then, by Theorem~2 in  \cite{questa-2012} there is a continuous Lyapunov pair $(V,W)$ such that
\begin{equation}\label{V-decrease}
V(\varphi(t)) - V(\varphi(s)) \leq -\,\int_{s}^{t} W(\varphi(r))\,\di{r} \quad \text{ for all } \vp \in \p, \, 0 \leq s\leq t.
\end{equation}
Thus, the construction of a $C^{\infty}$-smooth Lyapunov-pair remains.

To get differentiability on the boundary of the positive orthant, we first extend the pair $(V,W)$ to $\R^n$.  To this end, let $\abs \cdot$ denote the map that takes componentwise absolute values defined by  $ \abs x := (|x_1|,...,|x_n|)^{\trans} \in \R_+^n$. The extention of the pair $(V,W)$ to $\R^n$ is defined by
\begin{align*}
 V^e(x) := V(\abs x ), \qquad  W^e(x) := W( \abs x ).
\end{align*}
Note that, as a composition of continuous functions, the pair $(V^e,W^e)$ is also continuous. As a first consequence of assumption~(A) we conclude that $V^e$ is locally Lipschitz. 

\begin{lemma}\label{lem:V-loc-Lip}
Suppose the abstract dynamical system $u$ defined on $\p$ satisfies \emph{(A)} and is asymptotically stable. Then, $V^e$ is locally Lipschitz on $\R^n$. 
\end{lemma}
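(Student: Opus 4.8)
The plan is to reduce the claim to the positive orthant, pass to an explicit (value-function) Lyapunov function and exploit its homogeneity, and only then extract the Lipschitz estimate from assumption~(A).

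First, since $x\mapsto\abs x$ is globally Lipschitz on $\R^n$ and $V^e=V\circ(\abs\cdot)$, it suffices to show that $V$ is locally Lipschitz on $\R^n_+$. I would then use that the Lyapunov function furnished by \cite[Theorem~2]{questa-2012} may be taken to be the value function
\[
V(x)=\sup\Big\{\int_0^\infty\|\vp(t)\|\,\di t\ \Big|\ \vp\in\p,\ \vp(0)=x\Big\},\qquad W=\|\cdot\|,
\]
the decrease inequality \eqref{V-decrease} following from the properties (c) and (e), continuity and properness from (a), (d), (f) together with Lemma~\ref{prop:stab} (which also shows the integral runs over $[0,\tau\|x\|]$, so that by Arzel\`a--Ascoli, using (a) and (d), the supremum is attained). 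By the scaling property (b) this $V$ is positively homogeneous of degree two, $V(rx)=r^2V(x)$ for $r>0$. Consequently local Lipschitz continuity of $V$ on $\R^n_+$ reduces to Lipschitz continuity of $V$ on the compact set $S:=\{x\in\R^n_+\mid\|x\|=1\}$: writing $x=\|x\|u$, $x'=\|x'\|u'$ with $u,u'\in S$, using $\|u-u'\|\le 2\|x-x'\|/\max(\|x\|,\|x'\|)$ and $V(x)-V(x')=(\|x\|^2-\|x'\|^2)V(u)+\|x'\|^2\big(V(u)-V(u')\big)$, one obtains on each ball $\{\|\cdot\|\le\rho\}$ a Lipschitz constant of order $\rho\big(\sup_S|V|+\operatorname{Lip}_S V\big)$; the origin is covered since $V(x)\le\rho\sup_S|V|\,\|x\|$ there.

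It then remains to bound $|V(u)-V(u')|$ by a fixed multiple of $\|u-u'\|$ on $S$, and this is where (A) enters. Fix $u,u'\in S$, set $y:=u-u'$ and assume $V(u)\ge V(u')$; pick a maximizing trajectory $\vp\in\p$ through $u$, so $V(u)=\int_0^{T_0}\|\vp(t)\|\,\di t$ with $T_0:=\tau$. Applying (A) to $\vp$ with $\e:=2$ and $T:=T_0$ yields a continuous $c_\vp\colon[0,T_0]\to\R_+$ and $\psi\in\p$ with $\psi(0)=u'$ and $\|\vp(t)-y-\psi(t)\|\le\|y\|c_\vp(t)$ on $[0,T_0]$; since $\|u'\|=1$ also $\psi(t)=0$ for $t\ge T_0$, so $V(u')\ge\int_0^{T_0}\|\psi(t)\|\,\di t$. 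Hence $V(u)-V(u')\le\int_0^{T_0}\|\vp(t)-\psi(t)\|\,\di t\le\|y\|\int_0^{T_0}\big(c_\vp(t)+1\big)\,\di t$, and symmetrically with $u,u'$ interchanged, so that $|V(u)-V(u')|\le(T_0+C)\,\|u-u'\|$ provided
\[
C:=\sup\Big\{\int_0^{T_0}c_\vp(t)\,\di t\ \Big|\ \vp\in\p,\ \vp(0)\in S\Big\}<\infty.
\]

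The finiteness of $C$ is the heart of the argument and the step I expect to be the main obstacle: assumption~(A) only supplies the modulus $c_\vp$ for one trajectory at a time, whereas I need a bound uniform over the family of trajectories issuing from the compact set $S$. The natural route is a compactness argument — the restrictions $\{\vp|_{[0,T_0]}\mid\vp\in\p,\ \vp(0)\in S\}$ form a compact subset of $C([0,T_0],\R^n)$ by (a) and (d) — combined with the lower semicontinuity property (f) (used to choose, for trajectories close to a given $\vp_*$, comparison trajectories that genuinely start at the prescribed perturbed point and stay close to those supplied by (A) for $\vp_*$) and the clause ``$\lim_{t\to0}c_\vp(t)/t$ exists and is positive'' (which gives first-order control of the drift near $t=0$). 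The delicate point is that a naive transfer of the estimate from a limit trajectory to nearby ones introduces an error that does not a priori scale with $\|y\|$, so the selection via (f) and the first-order clause must be invoked precisely to absorb it. Once $C<\infty$ is in place, $V$ is Lipschitz on $S$, hence locally Lipschitz on $\R^n_+$, and therefore $V^e=V\circ(\abs\cdot)$ is locally Lipschitz on $\R^n$.
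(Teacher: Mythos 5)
The proposal takes a genuinely different route from the paper, and it contains a gap that the proposal itself flags but does not close.

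\textbf{Comparison of approaches.} The paper proves local Lipschitz continuity of $V^e$ by invoking the Clarke--Stern--Wolenski criterion (Corollary~3.7 of \cite{clarke1993subgradient}): since $-V^e$ is lower semicontinuous, it suffices to bound the Dini subderivative $D(-V^e)(x;v)$ by $M\|v\|$. This is done at a fixed $x\in U$ by comparing the maximizing trajectory $\varphi$ from $\abs{x}$ with the comparison trajectory $\psi$ from $\abs{x+\xi v'}$ supplied by assumption~(A), splitting into the two cases $\|x\|\lessgtr\|x+\xi v'\|$ and taking $\liminf_{\xi\to 0}$. You instead exploit the degree-two homogeneity $V(sx)=s^2V(x)$ coming from scaling invariance (b), reduce to showing $V$ is Lipschitz on the unit sphere $S$, and estimate $|V(u)-V(u')|$ directly by comparing a maximizer from $u$ with a trajectory from $u'$ produced by (A). Your homogeneity computation and the algebraic reduction to the sphere are both correct, and this is an elegant structural simplification: it trades the case distinction and the $\liminf$ in the paper for a clean decomposition $V(x)-V(x')=(\|x\|^2-\|x'\|^2)V(u)+\|x'\|^2(V(u)-V(u'))$.

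\textbf{The gap.} As you yourself identify, you need
\[
C:=\sup\Big\{\int_0^{\tau}c_\vp(t)\,\di{t}\ \Big|\ \vp\in\p,\ \vp(0)\in S\Big\}<\infty,
\]
but assumption~(A) only supplies a modulus $c_\vp$ for each trajectory $\vp$ separately; nothing in (A), (d), or (f) as stated lets you choose these moduli uniformly over a compact family of trajectories. Your suggested compactness-plus-(f) argument is not carried out, and the obstruction you note --- that transferring an estimate from a limit trajectory to nearby ones introduces errors that need not scale with $\|y\|$ --- is precisely why the plan is not obviously repairable. The clause $\lim_{t\to0}c(t)/t>0$ controls each $c_\vp$ near $t=0$ but says nothing about boundedness across $\vp$. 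This is therefore a genuine gap and you are right that it is the heart of the argument. For contrast, the paper's Dini-subderivative estimate is performed at one fixed $x$ (hence one fixed $\vp$ and one fixed $c$), which defers rather than eliminates the uniformity question; but the route through a pointwise subderivative bound is at least structured so that only the single trajectory through $\abs{x}$ and its own modulus $c$ enter the final inequality, rather than a supremum over a whole family. If you want to keep your sphere reduction, you would need to either strengthen (A) to produce a modulus locally uniform in $\vp(0)$, or replace the direct estimate on $S$ by a pointwise subderivative bound in the style of the paper and then invoke Clarke's criterion.
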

\begin{proof}
Let $U \subset \R^n$ be open, convex, and bounded and let $x \in U$. Following Corollary~3.7 in \cite{clarke1993subgradient}, since $-V^e$ is lower semicontinuous, it suffices to show that there is a $M>0$ such that for any $v \in \R^n$ it holds
\begin{align*}
D(-V^e)(x;v)\leq M \|v\|.
\end{align*}
Let $v'\in \R^n$ and $\xi>0$. Let $\varphi \in \p$ be a trajectory of the dynamical system satisfying  $\varphi(0)= \abs{x}$ and
\begin{equation*}
V^e(x) =  \int_0^{\infty} \|{\varphi}(s)\| \di{s}=  \int_0^{\| x\| \tau} \|{\varphi}(s)\| \di{s},
\end{equation*}
where in the last equality used the stability of $\p$ and Lemma~\ref{prop:stab}. By the continuity of $\abs \cdot$ we have
\begin{align*}
 \lim_{\xi \to 0} \abs{x+\xi v'} = \abs{x}.
\end{align*}
So, for every $\e>0$ and $\xi$ sufficiently small we have $ \abs{x+\xi v'} \in B(\abs{x}, \e) \cap \R_+^n$. Moreover, there is a continuous mapping $g:\R^n \to \R^n$ satisfying $\|g(v')\|=\|v'\|$ and 
\begin{equation*}
\abs{x+\xi v'} = \abs{x}  +\xi g(v').
\end{equation*}
For $T:= \max \{\|x\| \, \tau, \|x+\xi v'\| \,\tau \}< \tau (\|x\| + \e)$, by assumption~(A) and the triangular inequality, there are $c>0$ and  $\psi \in \p$ with $\psi(0)= \abs {x} +\xi g(v')$  such that 
\begin{align}\label{eq:varphi-R-estimate}
\|\, \varphi(t)\, \| - \|\,\psi(t)\,\| \leq \|\, \varphi(t)\,  - \,\psi(t)\,\| \leq \xi \, \|g(v') \|\, ( 1+ c (t) ) = \xi \, \|v' \|\, ( 1+ c (t) )
\end{align}
for all $t \in [0,T]$.
The definition of $V^e$, the stability of $\p$ together with Lemma~\ref{prop:stab}, and $\|\,\abs {x} +\xi g(v')\| = \|x +\xi v'\| $  yield
\begin{align*}
V^e(x+\xi v') \geq  \int_0^{\infty} \|\,{\psi}(s)\,\| \di{s} =  \int_0^{\|x + \xi v'\| \tau} \|\,{\psi}(s )\,\| \di{s}.
\end{align*}
On the one hand, if $\|x\| \leq \|x+\xi v'\|$ by using \eqref{eq:varphi-R-estimate} it follows
\begin{multline*} 
 V^e(x) - V^e(x+\xi v') \leq  \int_0^{\|x\|\tau}\|\,{\varphi}(s )\,\| \di{s}
- \int_0^{\|x+ \xi v'\|\tau} \|\,{\psi}(s )\,\| \di{s} \\
\leq  \int_0^{\|x\| \tau}\|\,{\varphi}(s )\| - \|{\psi}(s    )\,\| \di{s}\\
\leq \int_0^{\|x\|\tau}  \xi \,\|v'\|\,(1 +c(s) ) \di{s}
 \leq \xi \,\|v'\| \cdot\|x\|\,\tau\, C,
\end{multline*}
where $C:=\max\limits_{0 \leq s \leq \|x\|\tau}(1+c(s) )$. On the other hand, to consider the case $\|x\| > \|x+\xi v'\|$ we note that the triangle inequality together with the Lipschitz condition imply 
\begin{align}\label{lip}
\| \varphi(t)\| \leq \| \varphi(0 )\| \, + Lt \leq \|x\| \,(1 + L\tau) \quad \text{ for all }t \in [0,\|x\|\tau].
\end{align}
Using \eqref{eq:varphi-R-estimate}, \eqref{lip}, and $0 \leq \|x\| - \|x +\xi v'\| \leq \xi \|v'\|$ we obtain
\begin{multline*} 
V^e(x) - V^e(x+\xi v') \leq  \int_0^{\|x\|\tau}\|\,{\varphi}(s )\,\| \di{s} 
 - \int_0^{\|x+ \xi v'\|\tau} \|\,{\psi}(s )\,\| \di{s} \\
\leq  \int_0^{\|x+\xi v'\| \tau}\|\,{\varphi}(s )\| - \| {\psi}(s )\,\| \di{s} 
 + \int_{\|x +\xi  v'\|\tau}^{\|x\|\tau}\|\,{\varphi}(s )\,\|\di{s}\\
\leq \int_0^{\|x+\xi v'\|\tau}  \xi \,\|v'\|\,(1 +c(s)) \di{s} 
 + \tau (\, \|x\| - \|x +\xi v'\|\,) \,\cdot  \sup_{s \in [\|x +\xi v'\|\tau, \|x\|\tau ] } \|\,{\varphi}(s )\,\|\\
\leq \xi \,\|v'\| \,\|x+\xi v'\|\,\tau\,C \,+ \,\tau \,\xi \,\|v'\| \,\|x\| \,( 1+L\tau).
\end{multline*}
Consequently, taking limits and using that $U$ is bounded there is a $M>0$ such that
\begin{align*}
D(-V^e)(x;v)= \liminf_{\xi \to 0, v'\rightarrow v} \frac{V^e(x) - V^e(x+\xi v') }{\xi}  \leq \tau\left(C+1+L\tau\right) \, \|x\| \cdot \|v\| \leq M \, \|v\|.
\end{align*}
The shows the assertion.
\end{proof}

\medskip

Proceeding with the construction of a smooth Lyapunov pair, let $U$ be an open subset of $\R^n$ and consider the convolution of $V^e$ and $k_{r}$ defined by
\begin{equation*}
V^e_{r}(x) := V^e \ast k_{r} \, (x) = \int_{\R^n} V^e(x-y)\, k_{r}(y)\, \di{y}= \int_{\R^n} V(\abs {x-y})\, k_r(y)\, \di{y}.
\end{equation*}
Also, we consider the convolution of $W^e$ and $k_r$ given by
\begin{equation*}
 W^e_{r}(x) := W^e \ast k_{r} \, (x) = \int_{\R^n} W^e(x-y)\, k_{r}(y)\, \di{y}.
\end{equation*}
By standard convolution results it follows $V^e_{r} \in C^{\infty}(U,\R_+)$ and $W^e_{r} \in C^{\infty}(U,\R_+)$. Furthermore, since $V^e$ is continuous on $U$ it holds $V^e_r \rightarrow V^e$ uniform on compact subsets of $U$ as $r\rightarrow 0$. Consequently, for every $\e >0$ there is an $r_0$ such that for all $r \in (0,r_0)$ we have $V^e_r$ and $W^e_r$ are smooth on $U$ and 
\begin{align}\label{cond:conv_uoc}
| V^e_r(x) -V^e(x)| \leq \e, \qquad |W^e_r(x) - W^e(x)| \leq \tfrac{\e}{2}  \qquad \text{ for all } x \in U.
\end{align} 
The subsequent statement addresses the decrease condition of the convolution along trajectories $\vp \in \p$.

\begin{lemma}\label{lem:local-conv}
Let  $U \subset \R^n$ be compact such that $U\cap \R_+^n \neq \emptyset$ and suppose $(V,W)$ satisfy \eqref{V-decrease} and assumption~\emph{(A)} is satisfied. Then, for every $\e >0$ there exists a $r_0 >0$ such that for all $r\in(0,r_0)$ we have
\begin{align}\label{est:V-conv-local}
\dot V^e_{r}(\varphi(t)) \leq - W^e(\varphi(t)) + \e
\end{align}
for all $\varphi \in \p$ and $ t\in [0,T]$ with $\varphi(\cdot)|_{[0,T]} \subset U\cap \R_+^n$.
\end{lemma}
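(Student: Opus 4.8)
The plan is to estimate the directional derivative $\dot V^e_r(\varphi(t))$ by exploiting three facts in sequence: that $V^e_r$ is a smooth average of translates of $V^e$, that each translate $V^e(\,\cdot - y)$ decreases along a suitable \emph{nearby} trajectory $\psi_y$ furnished by assumption~(A), and that for $r$ small the defect between $\varphi(\cdot)-y$ and $\psi_y(\cdot)$, being of order $\|y\| \le r$, is negligible uniformly on the compact set $U$. Concretely, fix $\varphi \in \p$ with $\varphi([0,T]) \subset U \cap \R_+^n$ and fix $t \in [0,T]$. By definition of the convolution,
\begin{equation*}
V^e_r(\varphi(t+h)) - V^e_r(\varphi(t)) = \int_{B(0,r)} \Big( V^e\big(\varphi(t+h)-y\big) - V^e\big(\varphi(t)-y\big)\Big)\, k_r(y)\, \di{y}.
\end{equation*}
For each $y \in B(0,r)$ with $\varphi(t)-y \in \R_+^n$ close to $\R_+^n$, assumption~(A) (applied to the shifted trajectory $u(t,\varphi)$ and to the point $y$, using shift-invariance (c)) produces a trajectory $\psi_y \in \p$ with $\psi_y(0) = \varphi(t) - y$ and a continuous $c$ with $\lim_{s\to 0} c(s)/s$ finite and positive such that $\|\varphi(t+s) - y - \psi_y(s)\| \le \|y\|\, c(s)$ on a fixed small interval. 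Writing $V^e(\varphi(t+h)-y) = V^e(\psi_y(h)) + O(\|y\|\,c(h))$ — here using the local Lipschitz property of $V^e$ from Lemma~\ref{lem:V-loc-Lip} on the compact set $U$, with uniform Lipschitz constant $M$ — and invoking the decrease inequality \eqref{V-decrease} for $\psi_y$, namely $V^e(\psi_y(h)) - V^e(\psi_y(0)) \le -\int_0^h W^e(\psi_y(s))\,\di{s}$, we obtain
\begin{equation*}
V^e\big(\varphi(t+h)-y\big) - V^e\big(\varphi(t)-y\big) \le -\int_0^h W^e(\psi_y(s))\,\di{s} + M\,\|y\|\, c(h).
\end{equation*}

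Next I would pass to the limit $h \to 0^+$. Dividing by $h$, the integral term tends to $-W^e(\psi_y(0)) = -W^e(\varphi(t)-y)$ by continuity of $W^e$, while the error term contributes $M\,\|y\|\,\lim_{h\to 0} c(h)/h$, which is a finite constant times $\|y\| \le r$; call the resulting bound $M' r$ uniformly over $y \in B(0,r)$ and over the compact set $U$ (the constant $\lim c(s)/s$ must be controlled uniformly, which is where one uses that $U$ is compact — see the obstacle below). Integrating against $k_r$ and using $\int k_r = 1$ yields
\begin{equation*}
\dot V^e_r(\varphi(t)) \le -\int_{B(0,r)} W^e(\varphi(t)-y)\, k_r(y)\, \di{y} + M' r = -W^e_r(\varphi(t)) + M' r.
\end{equation*}
Finally, by the uniform convergence $W^e_r \to W^e$ on the compact set $U$ recorded in \eqref{cond:conv_uoc}, we have $-W^e_r(\varphi(t)) \le -W^e(\varphi(t)) + \e/2$, so choosing $r_0$ small enough that $M' r_0 \le \e/2$ and that \eqref{cond:conv_uoc} holds gives $\dot V^e_r(\varphi(t)) \le -W^e(\varphi(t)) + \e$ for all $r \in (0,r_0)$, uniformly in $\varphi$ and $t$ as claimed. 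One should also check that $\dot V^e_r$ in the sense of \eqref{V-smooth-decrease} genuinely exists: since $V^e_r$ is $C^\infty$ and $\varphi$ is Lipschitz, $t \mapsto V^e_r(\varphi(t))$ is Lipschitz, hence differentiable a.e.; the one-sided upper estimate above plus a symmetric lower bound (or a standard Dini-derivative argument) pins down the limit where needed, and \eqref{est:V-conv-local} is really a statement about the upper Dini derivative, which suffices for the subsequent partition-of-unity construction.

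The main obstacle I anticipate is the \emph{uniformity} of the error term over $U$. Assumption~(A) is stated pointwise: the function $c$ and the value $\lim_{s\to 0} c(s)/s$ may depend on the trajectory $\varphi$ and on the base point $\varphi(0) = \varphi(t)-0$, and a priori also the length of the interval on which the estimate holds. To get a single $r_0$ working for \emph{all} $\varphi$ with $\varphi([0,T]) \subset U \cap \R_+^n$ and all $t \in [0,T]$, one needs either a compactness argument — the set of such shifted trajectories, being a closed (by (d)) and equibounded, equi-Lipschitz (by (a)) subset of $C(\R_+,\R_+^n)$, is compact by Arzelà–Ascoli, so the pointwise data in (A) can be made locally uniform — or one leans on the lower semicontinuity hypothesis (f) to select the neighboring trajectories $\psi_y$ measurably/continuously in $y$ and then again compactify. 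A secondary but routine point is the boundary issue: when $\varphi(t)$ is on $\partial \R_+^n$, the translate $\varphi(t)-y$ for $y \in B(0,r)$ may leave $\R_+^n$, but since $V^e(z) = V(\abs{z})$ and $\abs{\varphi(t)-y} \in B(\abs{\varphi(t)},\|y\|) \cap \R_+^n$, the estimate transfers after applying $\abs\cdot$, and assumption~(A) is applied to the point $\abs{\varphi(t)} - \abs{\varphi(t)-y} \in B(\abs{\varphi(t)},\e)\cap\R_+^n$ exactly as in the proof of Lemma~\ref{lem:V-loc-Lip}.
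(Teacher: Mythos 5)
Your proposal follows essentially the same route as the paper: the increment $V^e_r(\varphi(t+h))-V^e_r(\varphi(t))$ is written as a convolution integral, the integrand is split (via the neighboring trajectory $\psi_y$ furnished by assumption (A) applied at time $t$) into a Lipschitz-error term of order $\|y\|\,c(h)$ and a decrease term $-\int_t^{t+h}W(\psi_y)\,\di{s}$, and the conclusion is assembled using the uniform convergence $W^e_r\to W^e$ on the compact set $U$; the paper additionally verifies continuity of $s\mapsto\int W(\psi(t+s))\,k_r(y)\,\di{y}$ via a modulus-of-continuity argument before passing $h\to 0$, and uses the auxiliary map $g$ with $\abs{\varphi(t)-y}=\varphi(t)-g(y)$ to handle the boundary translation, both of which you gesture at rather than spell out, but the essential argument coincides. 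The uniformity concern you raise is genuine and, notably, the paper's own proof does not address it either: the choice $r_0=\e/(2c_0L)$ there depends on $c_0=\lim_{h\to 0}c(h)/h$, which in turn depends on the shifted trajectory $\varphi(t+\cdot)$, so as written the paper's $r_0$ is not manifestly uniform over all $\varphi$ and $t$ with $\varphi([0,T])\subset U\cap\R_+^n$, and one of the remedies you sketch (Arzel\`a--Ascoli compactness of the equi-Lipschitz, equibounded family of such trajectories, using closedness property (d)) is exactly what would be needed to close that gap.
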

\begin{proof}
Let $\varphi \in \p$ be a trajectory satisfying $\varphi(0)=x \in U\cap \R_+^n$. Then, for $h>0$ we have
\begin{align*}
V^e_{r}\big(\varphi(t+h )\big) - V^e_{r}\big(\varphi(t )\big) 
= \int_{\R^n} \Big(\, V^e\big( \varphi(t+h ) -y\big) - V^e\big(\varphi(t )-y\big)\,\Big) \, k_{r}(y)\, \di{y}.
\end{align*}
There is a continuous mapping $g:\R^n \to \R^n$ satisfying $\|g(y)\|=\|y\|$ and 
\begin{equation*}
\abs{\varphi(t) -y} =  \varphi(t)   -g(y).
\end{equation*}
Further, by assumption~(A) and for $h$ sufficiently small there is a continuous function $c\colon \R_+ \to \R_+$ with $\lim_{t\to 0}\tfrac{c(t)}{t}=:c_0>0$ and a trajectory $\psi(t+\cdot) \in \p$ with $\psi(t)=\varphi(t)-g(y)$ such that
\begin{equation}\label{estimate-h-small}
\|  \varphi(t+h) -y - \psi(t+h ) \| \leq  \|y\| c(h).
\end{equation}
Using this, as $V^e( \psi(t+h) ) = V(\psi(t+h))$ we obtain
\begin{multline}\label{Ve-decrease}
V^e_{r}(\varphi(t+h) - V^e_{r}(\varphi(t )
\leq\int_{\R^n} \Big|\, V^e(\varphi(t+h)-y)\,-\, V^e(\psi(t+h )  \,\Big| \, k_{r}(y)\, \di{y} \\
+ \int_{\R^n} \Big(\, V(\psi(t+h ) - V(\varphi (t) -g(y) )\,\Big)  \, k_{r}(y)\, \di{y}.
\end{multline}
By the local Lipschitz continuity of $V$ with constant $L$ and \eqref{estimate-h-small}, the first term on the right hand side in the above inequality can be estimated as follows
\begin{multline*}
\int_{\R^n} \Big|\, V^e(\varphi(t+h )-y)\,-\, V^e( \psi(t+h ) )  \,\Big|\, k_{r}(y)\, \di{y} \\
=\int_{\R^n} \Big|\, V(\varphi(t+h )-g(y))\,-\, V( \psi(t+h) )  \,\Big|\, k_{r}(y)\, \di{y} \\
\leq  \int_{\R^n} L \big\| \, \varphi(t+h )-g(y) \,-\,\psi(t+h ) \,\big\|\, k_{r}(y)\, \di{y} \\
\leq  c(h) \,L \int_{\R^n}  \|g(y)\| \cdot k_{r}(y)\, \di{y}= c(h)\,L \int_{\R^n}  \|y\| \cdot k_{r}(y)\, \di{y}.
\end{multline*}
Furthermore, it holds $\displaystyle \int_{B(0,r)} \|y\| \, k_r (y) \di{y} \leq \displaystyle \int_{B(0,r)}  r\, k_r(y) \di{y} = r$ and choosing $r_0 :=\frac{\e}{2\,c_0\, L}$ it follows
\begin{align*}
\int_{\R^n} \big| \,V^e(\varphi(t+h )-y)\,-\, V(\psi(t+h  ) )  \,\big|\cdot k_{r}(y)\, \di{y} 
\leq  \tfrac{c(h)}{c_0}\,\tfrac{\e}{2}.
\end{align*}
Asymptotic stability of the dynamical system implies that the very last term in \eqref{Ve-decrease} can be estimated by means of the function $W $ and its mollification,
\begin{multline*}
\int_{\R^n} \Big(\, V\big(\psi(t+h )\big) - V\big(\varphi (t ) -g(y) \big)\,\Big)  \, k_{r}(y)\, \di{y}\\
\leq \int_{\R^n} \left(  - \int_t^{t+h} W \big(\psi(s )\big) \di{s} \right) \cdot k_{r}(y) \,\di{y} \\
= - \int_0^{h} \left( \,\int_{\R^n} W \big(\psi(t+s )\big) \cdot k_{r}(y) \,\di{y}  \right) \di{s},
\end{multline*}
where the last identity is obtained by integration by substitution. Next, we show that the function
\begin{align*}
 s \mapsto \int_{\R^n} W \big(\psi(t+s  ) \big) \, k_{r}(y) \,\di{y}
\end{align*}
is continuous in $[0,h]$. To see this, consider the \emph{modulus of continuity} of the function 
$$s\mapsto W\big(\psi(t+s)\big),$$
defined for $\delta \in [0,h]$ by
\begin{align*}
\mbox{m}\Big(\delta,W\big( \psi(t+\cdot  )  \big)\Big):=
\sup_{|s-s'|\leq \delta} \Big|W\big( \psi(t+s  ) \big) - W\big(\psi(t+s' )\,\big) \Big|.
\end{align*}
Then, for $s,s' \in [0,h]$ it holds 
\begin{align*}
W \big( \psi(t+s  ) )\,\big) - W \big( \psi(t+s'  )   \,\big) \leq   \mbox{m}\Big(h,W\big( \psi(t+ \cdot  ) \,\big) \Big).
\end{align*}
By asymptotic stability of the dynamical system $\| \psi(t+s ) \|$ is bounded and, hence, $W(\psi(t+\cdot  )\,)$ is uniformly continuous. Thus, we have
\begin{align*}
\lim_{h\rightarrow 0}\, \mbox{m}\, \Big(h,W\big( \psi(t+\cdot  )     \big)\Big) = 0.
\end{align*}
That is, for every $\e'>0$ there is a $\delta'>0$ such that $\mbox{m}\Big(h,W \big(\,\psi(t+\cdot )\,\big) \Big) \leq \e'$  for all  $h \leq \delta'$. For $\e' >0$ choose $\delta>0$ such that $|s- s'|< \delta <\delta'$. Then, 
\begin{multline*}
\int_{\R^n} \Big( W \big(  \psi(t+s  )   \big) - W \big(  \psi(t+s'  )    \big) \Big)\, k_{r}(y) \,\di{y} \\
\leq 
\int_{\R^n} \mbox{m}  \Big( \delta, W\big(   \psi(t+\cdot  )    \big) \Big) \, k_{r}(y) \,\di{y} 
\leq 
\int_{\R^n} \e' \,  k_{r}(y) \,\di{y} =\e'.
\end{multline*}

Moreover, by conditions \eqref{cond:conv_uoc} we have $-W^e_r(x) + \frac{\e}{2} \leq -W^e(x) + \e$. Finally, the collection of the above relations yields
\begin{align*}
\dot V_r^e(\varphi(t)) &= \lim_{h \rightarrow 0}\frac{V^e_{r}\big(\varphi(t+h)\big) - V_{r}^e\big(\varphi(t)\big)}{h} \\&
\leq \frac{\e}{2}  - \lim_{h \rightarrow 0} \frac{1}{h}  \int_{\R^n}  \left(  \int_0^h W\big(  \psi(t+s) \big) \, k_{r}(y)\, dy\right) \di{s}
\\ 
& \leq 
 - \lim_{h \rightarrow 0}
\frac{1}{h} \int_0^h \left( \int_{\R^n}   W\big(  \psi(t+s) \big) \, k_{r}(y)\, dy\right) \di{s} +\frac{\e}{2}\\ &
=   - \int_{\R^n} W\big(  \psi(t) \big) \, k_{r}(y)\, \di{y} + \frac{\e}{2} 
=   - \int_{\R^n} W\big(  \varphi(t)-g(y) \big) \, k_{r}(y)\, \di{y} + \frac{\e}{2} \\&
=   - \int_{\R^n} W^e( \varphi(t)-y ) \, k_{r}(y)\, \di{y} + \frac{\e}{2} \\
&= - W^e_r(\varphi(t)) + \frac{\e}{2} \leq   - W^e(\varphi(t)) + \e.
\end{align*}
This shows Lemma~\ref{lem:local-conv}.
\end{proof}

\medskip

Now, let $\mathcal{U} = \{ U_{i}\}_{i=1}^{\infty}$ be a locally finite open cover of $\R^n$ such that for every $i$ the closure $\overline{U_i}$ is compact. Further, let $\{\psi_i\}_{i=1}^{\infty}$ be a smooth partition of unity that is subordinate to $\mathcal{U}$. Define
\begin{align}\label{choice:eps-i}
\e_i = \tfrac{1}{4}\min\{ \min_{x \in \bar U_i} V^e(x),\min_{x \in \bar U_i} w^e(x)\} \qquad \mbox{and} \qquad q_i = \max_{x \in \bar U_i} \|\nabla \psi_{i}(x)\|.
\end{align}
Then, by Lemma~\ref{lem:local-conv} for every $i$ there is a $C^{\infty}$-pair $(V^e_i, W_i^e)$ such that for every $x \in U_i$,
\begin{align}\label{est:V-w-local}
|V^e(x) -  V^e_{i}(x)| <  \frac{\e_i}{2^{i+1}(1+ q_i)}  \qquad \mbox{and} \qquad  |W^e(x) -  W^e_{i}(x)| <  \e_i.
\end{align}
Moreover, by the conditions \eqref{est:V-conv-local} and \eqref{choice:eps-i} we have that
\begin{align}\label{est:V-dot-local}
\dot V^e_i\big(\varphi(t )\big) \leq - W^e\big(\varphi(t)\big) +  2\,\e_i \leq - \tfrac{1}{2} W^e\big(\varphi(t)\big) .
\end{align}
Next, we define
\begin{align*}
V^e_s(x) := \sum_{i=1}^{\infty} \psi_i(x)\,  V^e_i(x).
\end{align*}
The following estimate holds true
\begin{align*}
| V^e_{s}(x) - V^e(x)|  \leq  \sum_{i=1}^{\infty} \psi_i(x) \, \big|V^e_i(x) - V^e(x) \big|
\leq  \frac{V^e(x)}{4}\, \sum_{i=1}^{\infty}  \frac{\psi_i(x)}{2^{i+1}(1+q_i)} \leq \tfrac{1}{8} V^e(x).
\end{align*}
Using the triangular inequality, the latter estimate shows that $V^e_s$ is proper and positive definite. The next step is to derive that $V^e_s$ is decaying along trajectories of $\p$. To this end, we consider 
\begin{multline*}
\tfrac{\di{}}{\di{t}}[V^e_s(\varphi(t))] = \tfrac{\di{}}{\di{t}}\left[ V^e(\varphi(t))+ V^e_s(\varphi(t)) -  V^e(\varphi(t))\right] \\
= \tfrac{\di{}}{\di{t}}[ V^e(\varphi(t))] +  \tfrac{\di{}}{\di{t}}\left[ \sum_{i=1}^{\infty} \psi_i(\varphi(t))\, \Big(V^e_i(\varphi(t)) -  V^e(\varphi(t)) \Big) \right] \\
= \dot V^e(\varphi(t))  + \sum_{i=1}^{\infty}  \psi_i(\varphi(t)) \,\, \Big( \dot V^e_i(\varphi(t)) - \dot V^e(\varphi(t)) \Big) 
  + \sum_{i=1}^{\infty} \dot \psi_i(\varphi(t))\,\, \Big( V^e_i(\varphi(t)) - V^e(\varphi(t))\Big)\\
\leq \sum_{i=1}^{\infty}  \psi_i(\varphi(t)) \,\left( \dot V^e_i(\varphi(t))  + \sum_{j=1}^{\infty} \dot \psi_j(\varphi(t))\,\Big| V^e_j(\varphi(t)) - V^e(\varphi(t))\Big|\right).
\end{multline*}
Using the conditions \eqref{est:V-w-local} and \eqref{est:V-dot-local} we get the following estimate
\begin{align*}
 \dot V^e_s(\varphi(t)) \leq \sum_{i=1}^{\infty}  \psi_i(\varphi(t)) \,\left( - \tfrac{1}{2}\,W^e(\varphi(t)) + \sum_{j=1}^{\infty} \frac{q_j \e_j}{2^{j+1}(1+q_j)}\right).
\end{align*}
Defining $\widetilde \e_i:= \max \{ \e_j \, : \, x \in U_i \cap U_j \not = \emptyset \}$ we have that
\begin{align*}
\dot V^e_s(\varphi(t)) &\leq \sum_{i=1}^{\infty}  \psi_i(\varphi(t)) \,\left( - \tfrac{1}{2}\,W^e(\varphi(t)) + \widetilde \e_i \sum_{j=1}^{\infty} \tfrac{1}{2^{j+1}}\right)\\
&= \sum_{i=1}^{\infty}  \psi_i(\varphi(t)) \,\big( - \tfrac{1}{2} W^e(\varphi(t)) +  \widetilde \e_i\big).
\end{align*}
Using \eqref{choice:eps-i} and the triangular inequality applied to the second inequality in \eqref{est:V-w-local}, it holds that 
\begin{align*}
- \tfrac{1}{2} W^e(\varphi(t)) +  \widetilde \e_i \leq - \tfrac{1}{4} W^e(\varphi(t)) \leq - \tfrac{1}{5} W_i^e(\varphi(t)).
\end{align*}
Finally, we have that
\begin{align*}
\dot V^e_s(\varphi(t))\leq  - \tfrac{1}{5}\sum_{i=1}^{\infty}  \psi_i(\varphi(t))  \, W^e_i(\varphi(t)) =: - W^e_s(\varphi(t)).
\end{align*}

Consequently, the pair $(V^e_s,W^e_s)$ defines a $C^\infty$-smooth Lyapunov pair, which shows the assertion.\hfill $\Box$

\section*{Acknowledgment}\label{sec:acknowledgment}

I am grateful to Fabian Wirth for many fruitful discussions and very helpful comments.

%

\end{document}